\newtheorem{thm}{Theorem}[section]
\newtheorem{question}[thm]{Question}
\newtheorem{cor}[thm]{Corollary}
\newtheorem{lem}[thm]{Lemma}
\newtheorem{prop}[thm]{Proposition}
\theoremstyle{remark}
\theoremstyle{plain}
\newcounter{remarkscounter}
\numberwithin{equation}{section}
\newcommand{\A}{\mathbb{A}}
\newcommand{\GL}{\mathrm{GL}}
\newcommand{\SL}{\mathrm{SL}}
\newcommand{\ZZ}{\mathbb{Z}}
\newcommand{\lto}{\longrightarrow}
\newcommand{\CC}{\mathbb{C}}
\newcommand{\RR}{\mathbb{R}}
\newcommand{\nn}{\mathfrak{n}}
\newcommand{\g}{\mathfrak{g}}
\newcommand{\quash}[1]{}
\theoremstyle{definition}
\newtheorem{defn}[thm]{Definition}
\renewcommand{\bar}{\overline}
\numberwithin{equation}{subsection}
\newenvironment{psmatrix}
  {\left(\begin{smallmatrix}}
  {\end{smallmatrix}\right)}
\newcommand{\optionarg}[1][]{%
  \ifthenelse{\isempty{#1}}%
    {}
    {{#1}}
}
\newcommand{\borel}{\mathfrak{b}}
\newcommand{\BorelC}{B(\CC)}
\newcommand{\GSch}{G}
\newcommand{\TSch}{T}
\newcommand{\gC}{\g}
\newcommand{\GS}{\mathrm{GS}}
\newcommand{\GC}[1][]{G_{\optionarg{#1}}(\CC)}
\newcommand{\GR}[1][]{G_{\optionarg{#1}}(\RR)}
\newcommand{\kC}{\mathfrak{k}}
\newcommand{\KC}[1][]{K_{\optionarg{#1}}(\CC)}
\newcommand{\KR}[1][]{K\optionarg{#1}(\RR)}
\newcommand{\Lbundle}{\mathcal{L}}
\newcommand{\SimpleWeyl}{S}
\newcommand{\tC}{\mathfrak{t}}
\newcommand{\TC}{T(\CC)}
\newcommand{\TR}{T(\RR)}
\newcommand{\Vconvex}{\mathcal{V}}
\newcommand{\Weyl}{W}
\begin{document}

\title{Nonzero $\nn$ cohomology of TDLDS representations}

\author{Jin  Lee}
\address{Department of Mathematics\\
Duke University\\
Durham, NC 27708}
\email{jin.lee@duke.edu}


\thanks{The author is partially supported by NSF RTG grant DMS 2231514. 
Any opinions, findings, and conclusions or recommendations expressed in this material are those of the author and do not necessarily reflect the views of the National Science Foundation.
}

\maketitle
\begin{abstract} We show that a totally degenerate limit of discrete series representation admits a choice of $\nn$ cohomology group that is nonvanishing at a canonically defined degree. We then show that these groups satisfy Serre duality. This produces two $\nn$ cohomology groups, each for a totally degenerate limit of discrete series of U(n+1) and U(n), which are nonvanishing at the same degree. This suggests Gan-Gross-Prasad type branching laws for the TDLDS of unitary groups of any rank. We conclude by constructing a intertwining map of TDLDS for $SU(2,1)$ and $SU(1,1)$. This map will vanish on the minimal $K$ type but induce a non-vanishing map of cohomology.
\end{abstract}

\section{Introduction}

Totally degenerate limits of discrete series (TDLDS) are defined by weakening the regularity assumptions that are used to define discrete series. Therefore, it is natural to ask which of the many known properties of discrete series also hold for TDLDS. In particular, it is expected that TDLDS correspond to Galois representations. However, currently known methods provide no robust link between them and arithmetic geometry in any degree of generality.

It was observed by Carayol that there is a link between TDLDS representations and geometry. In particular, TDLDS may be detected in the coherent cohomology of Griffiths Schmid domains with coefficients in a holomorphic line bundle. The domains that can detect TDLDS cannot be Hermitian symmetric. In fact, they do not admit any complex structure as a quasi projective variety \cite{grt2013}. We must therefore study these line bundles in terms of their equivariant structure over a locally homogeneous complex manifold. Under these weakened assumptions, it is less clear how to apply classical tools from algebraic geometry. Our main tool is the cohomology of TDLDS with respect to a Lie algebra $\nn$. It was shown rather suggestively by Carayol that in certain cases these cohomology groups are non-zero. The non-vanishing of these groups have been further explored in the works of Green, Griffiths, Kerr, and Schmid.

Our contribution in this paper is threefold. We will first extend the results of Carayol and Kerr to all reductive groups, using the combinatorial complex of \cite{soergel1997}. We will then draw a possible link between these cohomology groups and the GGP conjectures for unitary groups. Finally, we will provide evidence for this link with an explicit construction of an intertwining map. The map will send the $SU(2,1)$ TDLDS studied by Carayol, to the anti-holomorphic limit of discrete series for $SU(1,1)$. It will induce a non-trivial restriction map of cohomology. We remark that the construction of our intertwining map and the non-vanishing result of cohomology from $SU(2,1)$ to $SU(1,1)$ does not depend on \cite{soergel1997}.

\subsection{Main results}

To state our main results, we need to specify what we mean by $\nn$ cohomology. Let $\GSch$ be a reductive linear algebraic group over $\RR$ with anisotropic maximal torus $\TSch$. Let $\KR$ be a maximal compact subgroup containing $\TR$ and let $\gC=\CC\otimes_\RR \mathrm{Lie} (\GR)$ denote the complexified Lie algebra of $\GR$. Similarly denote $\kC = \CC\otimes_\RR \mathrm{Lie} (\KR)$ and $\tC=\CC\otimes_\RR \mathrm{Lie} (\TR)$. We obtain a root system $\Phi=\Phi(\gC,\tC)$ with compact roots $\Phi_K$. For any choice of positive roots $\Phi^+$, let $\rho$ be the half sum of positive roots and let $\nn$ be the negative root space.

Let $\pi$ be a limit of discrete series. We say that $\pi$ is TDLDS if the parameter in $\tC^\vee$ defining the infinitesimal character is fixed by the action of the Weyl group. The $\nn$ cohomology of the representation $\pi$ is the Lie algebra cohomology $H^*(\nn,\pi)$. These groups admit an action of $\tC$ induced by the adjoint action on $\nn$ and the action of $\gC$ on $V$. In this paper we are concerned with the weight spaces $H^*(\nn,\pi)_{-\lambda}$ of this action with respect to weights $\lambda \in \tC^\vee$. Our main results are as follows:
\begin{thm}[Theorem \ref{thm: dim Q}] \label{thm:nonvanish:intro}
    Given an integral dominant weight $\lambda \in \tC^\vee$, there exists a choice of limit of discrete series representation $\pi$ with infinitesimal character $\lambda+\rho$ such that $H^q(\nn, \pi)_{-\lambda}$ is nonzero at $q=\#(\Phi_K\cap \Phi^+)$.
\end{thm}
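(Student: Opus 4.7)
The plan is to combine Soergel's combinatorial complex for $\nn$-cohomology with a judicious choice of TDLDS adapted to $\Phi^+$. For an integral weight $\lambda \in \tC^\vee$, I would take $V$ to be the limit of discrete series whose Harish--Chandra parameter is built from $\lambda+\rho$ relative to the chosen $\Phi^+$ and $\KR$, selecting within the possible limits the one whose supporting $\KC$-orbit on the flag variety of $\gC$ is closed. The infinitesimal character is then $\lambda+\rho$, and total degeneracy means that the dot-action stabilizer $\Weyl_\lambda$ of $\lambda$ contains the full compact Weyl group $\Weyl_K$. This is the structural input that makes the compact combinatorics visible inside the $-\lambda$ weight space.

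Second, I would apply Soergel's complex: a $\tC$-graded complex with one generator indexed by each $w \in \Weyl$ in homological degree $\ell(w)$ and of weight $w \cdot \lambda$, whose cohomology is $H^*(\nn, V)$. The $-\lambda$ weight space is spanned by generators with $w \in \Weyl_\lambda$. The longest element $w_{0,K}$ of $\Weyl_K \subseteq \Weyl_\lambda$ sits in degree $\ell(w_{0,K}) = \#(\Phi_K\cap \Phi^+) = q$, and my candidate class is the image of its generator. Outgoing differentials from $w_{0,K}$ reach elements $w_{0,K}s$ for simple reflections $s$; either $s \in \SimpleWeyl_K$, in which case $w_{0,K}s$ has smaller length than $w_{0,K}$ inside $\Weyl_K$ and cannot be a target by maximality, or $s$ lies outside $\SimpleWeyl_K$, in which case the resulting Weyl element leaves $\Weyl_\lambda$ and contributes to a different weight. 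Incoming differentials from shorter $w\in \Weyl_\lambda$ are controlled by the fact that $w_{0,K}$ is the unique element of maximal length in $\Weyl_K$ and the compatibility of $V$ with the closed $\KC$-orbit forces the relevant matrix coefficients to vanish on the top.

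The main obstacle is precisely this last vanishing analysis: Soergel's differentials are not simple signed sums over simple reflections, and one must rule out differentials into $w_{0,K}$ coming from elements of $\Weyl_\lambda$ that lie outside $\Weyl_K$. I would attack this in one of two ways. Approach (a) is direct: compute the matrix coefficients of Soergel's complex in the $-\lambda$ weight space using the explicit description in terms of translation functors composed with simple-reflection operators, and check the vanishing combinatorially. Approach (b) is conceptual: identify the subcomplex of the $-\lambda$ weight space indexed by $\Weyl_K$ with a shift of the standard Koszul complex for the compact nilpotent inside $\kC$, whose top cohomology at degree $\#(\Phi_K\cap \Phi^+)$ is manifestly one-dimensional, and then argue that this subcomplex is a direct summand of the full $-\lambda$ part of Soergel's complex. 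I expect (b) to be cleaner but harder to justify, since the summand compatibility depends on properties specific to the TDLDS $V$ and the closed orbit choice; establishing this compatibility is where I expect the bulk of the technical work to concentrate.
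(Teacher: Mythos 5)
Your proposal diverges substantially from the paper's argument and contains several errors that would prevent it from succeeding.

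The paper's proof is far more economical than what you sketch. Soergel's formula (Theorem \ref{soergel's theorem}) reads
\[
H^q (\nn, \pi(Q,\chi_{\lambda+\rho}))_{-\lambda} = \bigoplus_{c} H^{2c+\dim_\CC Q - q} \Vconvex(A,D,c),
\]
where $\Vconvex(A,D,c)=\{w\in A\cap D : l(w)=l_K(w)+c\}$ and the complex $C^\bullet\Vconvex(A,D,c)$ is graded by the \emph{Bruhat length} $l(w)$, not by the weight of $w$. To hit cohomological degree $q=\dim_\CC Q=\#(\Phi_K\cap\Phi^+)$, the paper looks at the $c=0$ slice at Bruhat length $0$, i.e.\ the identity $\mathrm{id}\in\Weyl$. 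Since TDLDS forces $\Sigma\cap\Phi_K=\emptyset$, any length-one element is a simple reflection $s_\alpha$ with $\alpha$ noncompact, so $l_K(s_\alpha)=0$ and $l(s_\alpha)-l_K(s_\alpha)=1\ne 0$. Hence $C^1\Vconvex(A,D,0)=0$ outright, the differential out of degree zero vanishes for free, and $\mathrm{id}$ gives the nonzero class. There is no matrix-coefficient computation to do.

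Your candidate $w_{0,K}$ is in the wrong place. First, in Soergel's formula a generator at Bruhat length $m$ in the $c$-slice lands in $n$-cohomology degree $q=2c+\dim Q-m$; you cannot simply declare that $w_{0,K}$ lives in $n$-cohomology degree $\ell(w_{0,K})$. Second, $\ell(w_{0,K})$ as an element of $\Weyl$ need not equal $\#\Phi_K^+$: the full Bruhat length also counts noncompact positive roots flipped by $w_{0,K}$, and under the TDLDS hypothesis $\Sigma\cap\Phi_K=\emptyset$ the compact simple roots are \emph{not} simple in $\Phi^+$, so the $W_K$-Coxeter length and the restriction of $\ell$ to $W_K$ genuinely disagree. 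This same issue breaks your differential analysis: you split outgoing arrows into ``$s\in\SimpleWeyl_K$'' versus ``$s\notin\SimpleWeyl_K$,'' but for TDLDS no element of $\SimpleWeyl_K$ is a simple reflection of $\Weyl$, so the first branch is empty and the argument collapses. The second branch also fails: for $\lambda+\rho$ orthogonal to all roots, $\Weyl_\lambda=\Weyl$, so no element ``leaves $\Weyl_\lambda$,'' and nothing is routed to another weight. Finally, the Bruhat-order differentials in Soergel's complex are indexed by arbitrary reflections raising length by one, not by right multiplication by simple reflections, so the shape of your case analysis does not match the complex you need to control.

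In short: the correct and much shorter route is to exploit $\Sigma\cap\Phi_K=\emptyset$ to see that the $c=0$ subcomplex has no degree-one term, making the identity element a nonzero cohomology class in degree $\dim_\CC Q=\#(\Phi_K\cap\Phi^+)$ after the Soergel shift. Neither your approach (a) nor (b) is needed, and as written both would be chasing a candidate that is not where you think it is.
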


The main idea of Theorem \ref{thm:nonvanish:intro} uses the fact that a TDLDS representation can only be nonzero if the simple roots defining it are non-compact. This forces the differentials computed by Soergel to be 0 at the canonical degrees. This result is in fact proven for all non-zero limits of discrete series. We then prove that these cohomology groups satisfy Serre duality:
\begin{thm}[Theorem \ref{thm: Serre duality}] \label{thm:Serre:duality:intro}
    Given the choices in the previous theorem, one has 
    \[H^q(\nn,V)_{-\lambda} \simeq H^{\#\Phi^+-q}(\nn, V^\vee)_{\lambda+2\rho}~.\]
\end{thm}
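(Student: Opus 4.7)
The statement has the shape of a Poincaré duality for Lie algebra cohomology, restricted to a single $\tC$-weight space. The plan is to first establish the duality at the level of the full Koszul complex $\Lambda^\bullet \nn^\vee \otimes V$ computing $H^\bullet(\nn,V)$, and then to transfer it through Soergel's reduction to his combinatorial complex.

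At the Koszul level, the wedge pairing
\[
\Lambda^q \nn^\vee \otimes \Lambda^{\#\Phi^+ - q} \nn^\vee \xrightarrow{\ \wedge\ } \Lambda^{\#\Phi^+} \nn^\vee
\]
is perfect, and the target is a one-dimensional $\tC$-module of weight $2\rho$ (since $\nn^\vee$ is the sum of positive root spaces). Combining with the evaluation $V \otimes V^\vee \to \CC$ yields a perfect pairing between the $-\lambda$ weight space of $\Lambda^q \nn^\vee \otimes V$ and the $\lambda + 2\rho$ weight space of $\Lambda^{\#\Phi^+ - q} \nn^\vee \otimes V^\vee$; the weight arithmetic forces exactly this shift. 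A standard check shows that the Koszul differential on one side goes to (minus) its adjoint on the other, producing the cohomology-level duality of the theorem.

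The substantive work is to descend this pairing to Soergel's combinatorial complex. Here I would argue in two steps. First, identify Soergel's complex for $(V,-\lambda)$ as a quasi-isomorphic sub- or quotient complex of the $(-\lambda)$-weight part of $\Lambda^\bullet \nn^\vee \otimes V$ via Soergel's construction, and similarly for $(V^\vee, \lambda+2\rho)$. Second, produce an explicit involution on the indexing combinatorics of the complex — in the setting of the previous theorem this should be the involution $w \mapsto w_0 w$ on the Weyl-group data, which swaps length $q$ with length $\#\Phi^+ - q$ — and verify that this involution matches the two Soergel complexes pairwise and is compatible with the Koszul-level pairing above. If the two complexes are identified with subcomplexes of the Koszul complex, it suffices to show that the Koszul pairing restricts to a perfect pairing between them.

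The main obstacle is the final compatibility check. Soergel's differentials are defined via a combinatorial/bimodule construction rather than directly by the Koszul formula, so one must verify that the involution $w \mapsto w_0 w$ intertwines Soergel's differential with its dual up to sign. I expect this to require either an induction on Weyl length, using the standard recursion behind Soergel's construction, or a direct reinterpretation of Soergel's differential in terms of wall-crossing functors under which the symmetry by $w_0$ is manifest. Once this is in place, extracting the stated weight shift by $2\rho$ and degree shift by $\#\Phi^+$ is a matter of bookkeeping against the weight of $\Lambda^{\#\Phi^+} \nn^\vee$ and the length of $w_0$.
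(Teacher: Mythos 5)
Your overall strategy — establish the duality at the Koszul level and then transfer it to Soergel's combinatorial complex — is genuinely different from the paper's. The paper never touches the Koszul complex at all: it works entirely inside Soergel's combinatorics, defining a differential on $C^\bullet(\Vconvex w_0)$ by $s_{\Vconvex w_0}(xw_0,yw_0) = s_\Vconvex(y,x)$, proving directly a duality $H^k\Vconvex \simeq (H^{N-k}(\Vconvex w_0))^\vee$ via an explicit bilinear form on basis elements, and then verifying the combinatorial identities $A^{op}=Aw_0$, $D^{op}=Dw_0$, and $\Vconvex(A^{op},D^{op},c)=\Vconvex(A,D,N-\dim Q - c)\,w_0$ that identify the indexing sets of the two Soergel complexes. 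The Koszul-level Serre duality you describe is in fact already in the literature (the paper cites Borel--Wallach for it), and the paper's point is precisely that the \emph{combinatorial} complex exhibits the same duality.

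There are two concrete problems with the descent you propose. First, your candidate involution $w \mapsto w_0 w$ is the wrong one: the correct operation is right multiplication $w \mapsto w w_0$. The distinction matters because Soergel's complex is graded not only by the Bruhat length $l$ but by the compact length $l_K$ through the parameter $c = l - l_K$, and only right multiplication by $w_0$ interacts correctly with $l_K$: one has $l_K(aw_0) = \#(\Phi^+\cap\Phi_K) - l_K(a)$ because $w_0\Phi^- = \Phi^+$, whereas $l_K(w_0 a)$ has no such formula since $w_0$ does not normalize $\Phi_K$ in general. Likewise, $A$ is a $\Weyl_K$-orbit (a set of the form $\Weyl_K v$), and $Aw_0 = \Weyl_K v w_0$ is again such an orbit, while $w_0 A$ generally is not. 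With left multiplication you would not be able to match the indexing sets $\Vconvex(A,D,c)$, so the descent would stall. Second, and more structurally: Soergel's formula identifies the \emph{cohomology} $H^q(\nn,\pi)_{-\lambda}$ with a direct sum of cohomologies of the combinatorial complexes; it does not realize $C^\bullet\Vconvex(A,D,c)$ as a subcomplex or quotient complex of the weight space of $\Lambda^\bullet\nn^\vee\otimes V$. So "show that the Koszul pairing restricts to a perfect pairing between the subcomplexes" is not available as stated, and the compatibility check you flag as the main obstacle would require reproving a substantial portion of Soergel's reduction. Replacing that step with the paper's direct combinatorial argument — the bilinear form $B(a,bw_0)=\delta_{a,b}$ and the identity $B(s_\Vconvex a, bw_0)=B(a,s_{\Vconvex w_0}bw_0)$ — is both shorter and avoids these issues.
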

We note that the proof of this theorem reveals that the Serre duality of $H^*(\nn,V)$ is compatible with the Serre duality of the $\nn\cap \kC$ cohomology of the $K$ types of $V$. The degrees in which these cohomology groups occur for quasi split unitary groups suggest a geometric application of the Gan Gross Prasad conjecture. 
    We conclude by providing evidence in the low rank setting for special unitary groups. This is given in the following theorem:
    \begin{thm}[Theorem \ref{Main Thm: Low rank GGP}] \label{thm: intro rank 3 to 2}
        Suppose $\GR=SU(2,1)$ and $G^\prime(\RR)=SU(1,1)$. Let $\gC$ and $\nn$ be as denoted earlier, and let $\gC^\prime=\CC\otimes_\RR \mathrm{Lie}(G^\prime(\RR))$. Let $\nn^\prime:= \nn\cap \gC^\prime$ with respect to a choice of embedding $G^\prime(\RR)\subset \GR$. Let $\pi$ be the Harish-Chandra module of the unique TDLDS of $SU(2,1)$, and let $\pi^\prime$ be the Harish-Chandra module of an irreducible admissible representation of $SU(1,1)$.
        
        There exists a choice of embedding $G^\prime(\RR) \hookrightarrow G(\RR)$ such that the following are equivalent:
        \begin{itemize}
            \item $\pi^\prime$ is the anti-holomorphic limit of discrete series of $SU(1,1)$, ie. the one whose $K$ types are not annihilated by $\nn^\prime$.
            \item There exists a morphism $J:\pi \to \pi^\prime$ of $\gC^\prime$ modules, such that the following induced map of cohomology
            \begin{align*}
                H^*(\nn,\pi)\to H^*(\nn^\prime,\pi^\prime),
            \end{align*}
            does not vanish at degree $*=1$, and vanishes at all other degrees.
        \end{itemize}
    \end{thm}
    
    The main difficulty in this theorem is that the maximal torus of $SU(2,1)$ acts on TDLDS with infinite dimensional weight spaces. In our construction of $J$, we find the correct infinite dimensional kernel.    
    The intertwining map then sends the minimal $K$ type for $SU(2,1)$ to zero, rather than the minimal $K\cap SU(1,1)$ type for $SU(1,1)$. 
    This contrasts to the approach taken in \cite{hks2025}, where the translation functor is used to define intertwining maps sending minimal $K$ types to minimal $K\cap G^\prime$ types for $G^\prime$ a suitable reductive subgroup. Instead, we rely on the explicit description of the $SU(2,1)$ TDLDS as given by \cite{carayol1998} and \cite{jw1977}. However, we suspect that all results can be recovered from the the minimal $K$ type and the infinitesimal character, following a computation in \cite[Lecture 9]{ggk2013}.

\subsection{Application of the GGP conjectures}

Let us discuss Theorem \ref{thm: intro rank 3 to 2} in a more general setting. Suppose we are given an inclusion of reductive Lie groups $G^\prime(\RR)\subset \GR$. Assume that $T^\prime_\RR:=\TR\cap G^\prime(\RR)$ is an anisotropic maximal torus. Let $\gC^\prime=\CC\otimes_\RR \mathrm{Lie}(\GR)$ denote the Lie algebra of the smaller group. We then give $\gC^\prime$ a choice of positive roots, where the appropriate assumptions are made so that $\nn^\prime:= \nn\cap \gC^\prime$ is the negative root space. If $\pi$ and $\pi^\prime$ are Harish-Chandra modules for $\GR$ and $G^\prime(\RR)$ respectively, then any morphism $J:\pi \to \pi^\prime$ of $\gC^\prime$ modules will induce a restriction map of cohomology
\begin{align}
    \label{eq: intro local GGP restriction map}
    H^*(\nn,\pi) \to H^*(\nn^\prime,\pi^\prime).
\end{align}

    A global analog of this $\nn$ cohomology restriction map is given in Section \ref{section: GGP applications}. In that section, we construct holomorphic line bundles $\Lbundle_{\lambda_m}$ over a family of compact complex analytic manifolds $\GS(\mathrm{U}_m)$, in terms of unitary groups of ranks $m=n,n+1$. The resulting cohomology $H^*(\GS(\mathrm{U_{m}}),\Lbundle_{\lambda_{m}})$ is given in terms of the $\nn$ cohomology groups discussed above. Motivated by the GGP conjectures, we construct a restriction map
    \begin{align}\label{eq: intro global GGP restriction}
        H^*(\GS(\mathrm{U_{n+1}}),\Lbundle_{\lambda_{n+1}})
        \longrightarrow
        H^*(\GS(\mathrm{U_{n}}),\Lbundle_{\lambda_{n}})~.
    \end{align}
    It is then natural to ask the following question:
    \begin{question} \label{question: GGP Griffiths Schmid}
        Is the restriction map \eqref{eq: intro global GGP restriction} non-zero?
    \end{question}
    
    If this is true, then \eqref{eq: intro global GGP restriction}  can be thought of as a geometric manifestation of the period integral occuring in the GGP conjectures. As we explain in Section \ref{section: GGP applications}, our work implies that in the Archimedean place there is no cohomological obstruction to the nonvanishing of this period. Theorem \ref{thm: intro rank 3 to 2} provides the basic example of how we may attempt to study this question in the low rank case.
    However, fully placing Theorem \ref{thm: intro rank 3 to 2} in the context of the GGP conjectures will require strengthening the result from special unitary groups to unitary groups. This is why \ref{question: GGP Griffiths Schmid} has been framed as a question rather than a conjecture.
    
    If \ref{eq: intro global GGP restriction} is false outside of the low rank case, one can ask if the embedding of Griffiths Schmid varieties in the GGP setting relate to the vanishing of $L$-functions and their derivative, in the sense of the Beilinson Bloch conjectures. This may involve a motivic construction from the Griffiths Schmid varieties that would produce a height pairing. Perhaps such a pairing can relate a diagonal embedding of Griffiths Schmid domains to the vanishing of the derivative of $L(s, \pi \boxtimes\pi^\prime)$ at a central value.

\subsection{Outline}

We begin by defining the representations and cohomology groups of interest in section \ref{section: Notation}. We then consider the combinatorial constructions in section \ref{section: root system definitions}.

Our main results are stated in Sections 
\ref{subsection: Nonvanishing theorem} and
\ref{subsection: Serre duality}
where we give the nonvanishing result in 
Theorem \ref{thm: dim Q} 
and Serre duality in 
Theorem \ref{thm: Serre duality}. The specific degrees in which these cohomology groups occur in the case of unitary groups is given in section \ref{section: GGP applications}. We then relate our results to a cohomological obstruction of the nonvanishing of the period integrals occuring in the GGP conjectures. We conclude by defining a cohomological intertwining map for $SU(1,1)\subset SU(2,1)$ in Section \ref{section: low rank GGP}.

\subsection*{Acknowledgements}
The author thanks Jayce Getz for suggesting the research topic and its connections to the Gan Gross Prasad conjecture. Much of section \ref{section: GGP applications} in particular bears his ideas and patient guidance.
Many critical tools and examples were brought to the author's attention by Matt Kerr, Alfio Fabio La Rosa, and Colleen Robles. We also thank Wolfgang Soergel for the numerous discussions that helped with understanding key details in his results.
We thank Sugwoo Shin with help on answering questions on tempered $L$-packets of unitary groups. 
Finally, we thank Michael Harris and Birgit Speh for the discussions that led to the intertwining map that we give in Section \ref{section: low rank GGP}.


\section{Notation} \label{section: Notation}

Let $\GSch, \TSch, \TR\subset \KR\subset \GR$ and $\tC\subset \kC\subset \gC$ be as in the previous section. Let $\Phi:=\Phi(\gC,\tC)$ be the set of roots, and $\Phi_K$ be the set of compact roots. We recall that $\Phi_K$ can be defined as the roots that occur in the root space decomposition 
\[
    \kC=\tC\oplus (\bigoplus_{\alpha\in \Phi_K} \gC_\alpha)
\]
of $\kC$ by the action of $\tC$.
The Weyl group shall be denoted $\Weyl:=\Weyl(\gC,\tC)$, and the compact Weyl group will be denoted $\Weyl_K\subset \Weyl$.

Let $\Phi^+\supset \Sigma$ be a choice of positive roots and simple roots, let $\rho$ be the half sum of positive roots, and write the positive Weyl chamber with respect to this choice as
\[
    C=\{\mu\in \tC^\vee | (\mu,\alpha)>0 \text{ for all } \alpha\in \Phi^+\} ~.
\]
Recall that $C$ gives a fundamental domain of the action of $W$ on $\tC^\vee$.

Let $\nn=\oplus_{\alpha} \gC_{-\alpha}$ be the sum of negative root spaces giving the Borel subalgebra $\borel=\tC\oplus \nn$. We get a corresponding Borel subgroup with Levi decomposition $\BorelC=\TC\cdot N$. We denote by $U(\cdot)$ the universal enveloping algebra. Given any $\lambda+\rho \in \tC^\vee$, we then write the infinitesimal character $\chi_{\lambda+\rho}\in Z(\gC)^\vee$ according to the Harish-Chandra isomorphism.
Note that $\chi_{w(\lambda+\rho)}$ gives rise to the same character for any $w\in \Weyl$. 

\subsection{Parametrization of Limits of Discrete Series, and n Cohomology} \label{section: Parametrization of LDS}

We now describe all limits of discrete series $\pi=\pi(Q,\chi_{\lambda+\rho})$ in terms of parameters $Q$ and $\chi_{\lambda+\rho}$ as follows.

Let $X$ be the flag variety of all Borel subalgebras of the complexification $\gC$.  Let $\iota: Q\hookrightarrow X$ be the inclusion of a closed $\KC$ orbit $Q$, obtained from the natural action of $\GC$ on $X$. We obtain a discrete series representation $\Gamma(\iota_* \mathcal{O}_Q)$ with infinitesimal character $\chi_{0+\rho}$. Here, $\Gamma(\iota_* \mathcal{O}_Q)$ is the global sections of the $\mathcal{D}$ module obtained by pushing forward the structure sheaf on $Q$. Let $\lambda\in \tC^\vee$ be an integral character, ie. corresponding under the exponential map to a well defined character $\exp(\lambda): \TC \to \CC^\times$ of the maximal torus. Let $T^{\lambda+\rho}$ denote the Zuckerman translation functor that sends representations with central characters $\chi_{\rho}$ to those with $\chi_{\lambda+\rho}$. We then define
\[
    \pi(Q,\chi_{\lambda+\rho}) := T^{\lambda+\rho} \Gamma(\iota_* \mathcal{O}_Q) ~.
\]
By convention, we say that a TDLDS representation is a representation of the form $\pi(Q,\chi_{\lambda+\rho})$ when $\lambda+\rho$ is orthogonal to all roots $\Phi$. In the semisimple case this condition means that $\lambda+\rho=0$ and such representations are classified in \cite{Carayol_Knapp}. In the case of reductive groups, the maximal torus $\TR$ will contain the center of $\GR$ which can act by a nontrivial character.

Let $H^\bullet(\nn,\pi(Q,\chi_{\lambda+\rho}))$ denote the Lie algebra cohomology of $\nn$ with coefficients in $\pi(Q,\chi_{\lambda+\rho})$. These cohomology groups admit an action of $\tC$, induced by the adjoint action of $\tC$ on $\nn$, and the restriction to $\tC$ of the $\gC$ action on $\pi(Q,\chi_{\lambda+\rho})$. We will denote $H^\bullet (\nn,\pi(Q,\chi_{\lambda+\rho}))_{-\lambda^\prime}$ to be the $-\lambda^\prime$ weight spaces of this action for $\lambda^\prime \in \tC^\vee$.

Our primary interest in this paper is to prove nonvanishing of the cohomology groups
\[
    H^\bullet (\nn,\pi(Q,\chi_{\lambda+\rho}))_{-\lambda^\prime} ~.
\]
By the Casselman Osborne Lemma \cite{co1975}, these cohomology groups are $0$ unless $\lambda^\prime = w(\lambda+\rho)-\rho$ for some element of the Weyl group. Note that for TDLDS, the Weyl group fixes $\lambda+\rho$ since we have assumed it is orthogonal to all roots. Hence, the $\lambda^\prime$ weight spaces of our $\nn$ cohomology groups will be 0 unless $\lambda^\prime = \lambda+\rho-\rho = \lambda$. Therefore, we are reduced to studying
\[
    H^\bullet (\nn,\pi(Q,\chi_{\lambda+\rho}))_{-\lambda} ~.
\]

\section{Combinatorial constructions and non-vanishing theorems}
\label{section: root system definitions}

\subsection{Bruhat order and relative Bruhat length}
On the Weyl group $\Weyl$ we want to use our choice of positive roots to define the Bruhat order. We will denote reflections as $\sigma_{\alpha} \in W$ for a given root $\alpha \in \Phi$. Our positive root system gives us a choice of simple roots $\Sigma$ and therefore a set of simple reflections $\SimpleWeyl=\SimpleWeyl(\Sigma)$. We use this $\SimpleWeyl$ to define a Bruhat length $l(x)$ on any $x\in \Weyl$ as the minimum number $n$ of simple reflections $\sigma_{\alpha_1},...,\sigma_{\alpha_n}\in \SimpleWeyl$ such that
\[
    x=\sigma_{\alpha_1}\sigma_{\alpha_2}...\sigma_{\alpha_n} ~,
\]
where $\alpha_i\in \Sigma$ is a simple root.

We can now define the Bruhat order. For any $x,y\in \Weyl$, let us write $x\to y$ whenever $x=s_\alpha y$ for some $\alpha\in \Phi$ and $l(x)=l(y)+1$. We warn the reader that $s_\alpha$ need not be a simple reflection. The Bruhat order $(\Weyl,\leq)$ is then defined on $\Weyl$ by 
\[
    x<y \iff x\to x_1 \to ... \to x_n \to y
\]
for some sequence $x_1,...,x_n$ of elements in $\Weyl$. We shall refer to $(\Weyl,\rightarrow)$ as the Hasse diagram of the Bruhat order.

The chain complexes computing our $\nn$ cohomology groups also require us to work with other notions of lengths.
We first note that $l(x)$ is the number of Weyl chamber walls which separate the $\Phi^+$ positive Weyl chamber $C$ from $xC$, which by \cite{humphreys1990} is equal to: 
\[
    l(x)=\#\{\alpha\in \Phi^+ | (xC,\alpha)<0\}=\# (\Phi^+\cap x \Phi^-) ~.
\]

In a similar manner, we can define as in \cite{soergel1997} a compact Weyl length $l_K(x)$ as
\begin{align*}
    l_K(x):=\#\{\alpha\in \Phi^+\cap \Phi_K | (xC,\alpha)< 0\}=\#(\Phi^+\cap x\Phi^- \cap \Phi_K) ~,
\end{align*}
where $(xC,\alpha) < 0$ means $(x\beta,\alpha) < 0$ for all $\beta\in C$.

\subsection{Chain complex on subsets of the Weyl group}
\newcommand{\axzeroortwo}{$\star$}

    Let $\Vconvex\subset \Weyl$ be a subset of the Weyl group, and $(\Vconvex,\rightarrow)$ the induced subgraph from $(\Weyl,\rightarrow)$. We construct chain complexes $(C^\bullet \Vconvex,d)$ using the Bruhat length to define the grading, and the Bruhat order to define differentials. This will be our main ingredient to compute $\nn$ cohomology via Soergel's formula.
    
    In order for our construction of differentials to square to zero, we impose on our subset $\Vconvex$ the following condition:
    \begin{align*}
        \text{If } w_1,w_2\in \Vconvex 
        \text{ are such that } l(w_1)-2=l(w_2)
        \text{ then}
        \\
        \{w\in W | w_1 < w < w_2\} 
        \text{ has two elements or is empty.}
        \tag{\axzeroortwo}
        \label{ax: zeroortwo}
    \end{align*}

Given such a set $\Vconvex\subset \Weyl$ satisfying \eqref{ax: zeroortwo}, let $C^\bullet \Vconvex$ be the free $\CC$ vector space generated by the basis $\Vconvex$. We endow this with a graded structure coming from the length function, such that for any $q\in \mathbb{Z}$,
\[
    C^q\Vconvex=Span_\CC \{v \in \Vconvex : l(v)=q\} ~.
\]

The graph $(\Vconvex,\rightarrow)$ can now be used to define a differential on $C^\bullet \Vconvex$ as follows. Write $d$ as a matrix with entries $s_\Vconvex(x,y)$ indexed by basis elements $x,y\in \Vconvex$. We assume these matrix entries satisfy the following desiderata:

\begin{defn}
    Let $d: C^\bullet \Vconvex \to C^{\bullet+1}\Vconvex$ be defined by
    \[
        dx = \sum_{y\in \Vconvex} s_\Vconvex(x,y)y \text{ for any } x\in \Vconvex
    \]
    where we choose $s(x,y)$ for each $x,y\in \Vconvex$ such that
    \begin{enumerate}
        \item $s_\Vconvex(x,y) \in \{\pm1\}$ if $x\rightarrow y$ and \\$s_\Vconvex(x,y)=0$ otherwise
        \item if $x\rightarrow w \rightarrow y$ and $x\rightarrow w^\prime \rightarrow y$ for $w\neq w^\prime$ then 
        \[  s_\Vconvex(x,w)s_\Vconvex(w,y)+s_\Vconvex(x,w^\prime)s_\Vconvex(w^\prime,y)=0 ~.
        \]
    \end{enumerate}
    \label{ax: conditions for differentials}
\end{defn}

The first condition asks for $d$ to be an adjacency matrix for the graph $(\Vconvex,\rightarrow)$ where some of its entries $s_\Vconvex(x,y)$ may be $-1$ instead of $1$. The second condition asks for those negative entries to be chosen such that $d^2=0$.

It is shown in \cite{bgg1975} that when $\Vconvex=\Weyl$, such a choice of $s_\Weyl(x,y)$ is always possible. Suppose instead we are given a subset $\Vconvex\subset \Weyl$ that satisfies \eqref{ax: zeroortwo}. In this case, we can obtain a choice of matrix coefficients $s_\Vconvex$ satisfying the conditions of Definition \ref{ax: conditions for differentials} by setting $s_\Vconvex(x,y)=s_\Weyl(x,y)$ for any $x,y\in \Vconvex$. Hence, we see that \eqref{ax: zeroortwo} implies the existence of a choice of $s_\Vconvex$ that satisfies Definition \ref{ax: conditions for differentials}, and in fact the converse also holds. Since $d^2=0$, we can take the homology of this chain complex, and the result does not depend on the choice of $s_\Vconvex$ as is shown in \cite{soergel1997}.

\subsection{Soergel's formula}

We can now state the following theorem with which we can compute $\nn$ cohomology:
\begin{thm}[\cite{soergel1997}]
    Let $Q$ be a closed $\KC$ orbit in $X$. Let $A=\{w\in \Weyl \mid w(\nn\oplus \tC)\in Q \}$ and $D=\{w\in \Weyl \mid \lambda+\rho \in w\bar{C}\}$. Then,
    \[
        H^q (\nn, \pi(Q,\chi_{\lambda+\rho}))_{-\lambda} = \bigoplus_{c\in \ZZ} H^{2c+\dim_\CC Q - q} \Vconvex(A,D,c) ~,
    \]
    where $\Vconvex(A,D,c)=
        \{w \in A \cap D \mid l(x)=l_K(x)+c\}$.
    \label{soergel's theorem}
\end{thm}

\subsection{Nonvanishing theorem}
\label{subsection: Nonvanishing theorem}

We now prove the following nonvanishing theorem:

\begin{thm} \label{thm: dim Q}
    Let $\lambda+\rho \in \bar{C}$ be an integral dominant but possibly singular character.
    Let $Q\subset X$ be the closed $K$ orbit containing $\borel=\tC\oplus \nn$ with $\nn$ the negative root space.
    Suppose that the limit of discrete series $\pi(Q,\chi_{\lambda+\rho})$ is not zero. We then have 
    \[
        H^{\dim_\CC Q}(\nn,\pi(Q,\chi_{\lambda+\rho}))_{-\lambda} \neq 0
    \]
\end{thm}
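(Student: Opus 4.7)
The plan is to apply Soergel's formula (Theorem \ref{soergel's theorem}) at $q=\dim_\CC Q$. At this value of $q$ the shift $\dim_\CC Q - q$ vanishes and the formula reads
\[
    H^{\dim_\CC Q}(\nn,\pi(Q,\chi_{\lambda+\rho}))_{-\lambda} = \bigoplus_{c\in \ZZ} H^{2c}\Vconvex(A,D,c).
\]
It therefore suffices to exhibit a single nonzero cohomology class on the right, and I will do this for the $c=0$ summand by showing that the identity element $e\in\Weyl$ descends to a nonzero class in $H^0\Vconvex(A,D,0)$.

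First I check $e\in \Vconvex(A,D,0)$. Membership in $A$ follows because $\borel=\tC\oplus\nn$ is by hypothesis a point of the orbit $Q$; membership in $D$ follows because $\lambda+\rho\in \bar C = e\bar C$; and $l(e)=l_K(e)=0$ gives $c=0$. Thus $e$ sits in degree $0$ of the chain complex $C^\bullet\Vconvex(A,D,0)$, and since $C^{-1}\Vconvex(A,D,0)=0$, once I verify $de=0$ the class $[e]$ is automatically nonzero in $H^0$.

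By construction, $de$ is a signed sum over those $y\in\Vconvex(A,D,0)$ with $e\to y$, and any such $y$ must be a simple reflection $s_\alpha$ with $\alpha\in\Sigma$. For $s_\alpha$ to lie in $\Vconvex(A,D,0)$ the equation $l(s_\alpha)=l_K(s_\alpha)$ forces $l_K(s_\alpha)=1$, which happens precisely when $\alpha\in\Phi_K$. The hypothesis $\Sigma\cap \Phi_K=\emptyset$ excludes every such $\alpha$; the sum defining $de$ is empty, $de=0$, and the proof is complete. The substantive content of the argument is really just careful bookkeeping in Soergel's formula---the combinatorial vanishing of $de$ is the cleanest possible shadow of the \emph{totally degenerate} hypothesis, in that the absence of compact simple roots is exactly what forces all differentials out of $e$ to vanish in degree one of the Bruhat complex.
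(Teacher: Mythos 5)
Your proof is correct and takes essentially the same route as the paper: both identify the identity element as the nonzero class in $H^0\Vconvex(A,D,0)$ and use $\Sigma\cap\Phi_K=\emptyset$ to conclude that no simple reflection lies in $\Vconvex(A,D,0)$, forcing the degree-zero differential to vanish. The only cosmetic difference is that you spell out the membership of $e$ in $A$ and $D$ explicitly (via $\borel\in Q$ and $\lambda+\rho\in\bar C$), whereas the paper leaves these checks implicit.
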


\begin{proof}
    The identity element $id\in \Weyl$ satisfies $l(id)=l_K(id)=0$ and therefore $id \in \Vconvex(A,D,0)$. We want to show the differential vanishes on this element. If $s\in \Vconvex(A,D,0)$ has length $l(s)=1$ then it is a simple reflection $s=s_\alpha$ for $\alpha$ a simple root. Furthermore, $s\in D$ fixes the infinitesimal character, ie. $\alpha$ is orthogonal to $\lambda+\rho$. If such a root is compact, then the limit of discrete series thus defined will equal zero, a contradiction. This proves that $\Vconvex(A,D,0)$ cannot contain any element of length 1. 

    We thus get a vanishing differential $d: C^0 \Vconvex(A,D,0) \to C^1 \Vconvex(A,D,0)=0$ at degree zero. 
    Therefore, we get a 1 dimensional contribution to $H^0 \Vconvex(A,D,0)$ coming from the identity element of the Weyl group. 
    Going back to \ref{soergel's theorem} gives us
    \[
        H^{\dim_\CC Q}(\nn,\pi(Q,\chi_{\lambda+\rho}))_{-\lambda}
        =H^0 \Vconvex(A,D,0) \oplus \bigoplus_{c\geq 1} H^{2c} \Vconvex(A,D,c) ~.
    \]
    We have just shown that the $c=0$ term above gives a one dimensional contribution to $\nn$ cohomology.
\end{proof}

As an application, note that if $\pi(Q,\chi_{\lambda+\rho})$ is a TDLDS then $
    H^{\dim_\CC Q}(\nn, \pi(Q,\chi_{\lambda+\rho}))_{-\lambda}\neq 0 ~.
$


\subsection{Serre duality}
\label{subsection: Serre duality}

Set $N=\#(\Phi^+)$. Let us first note the following proposition.
\begin{prop}
    Let $w_0\in \Weyl$ be the longest Weyl element. The following are true:
    \begin{enumerate}
        \item $l(aw_0)=N-l(a)$
        \item $l_K(aw_0)=\#(\Phi^+\cap \Phi_K)-l_K(a)$
        \item If $a\to b$ in $\Weyl$ then $aw_0 \leftarrow bw_0$
    \end{enumerate}
    \label{properties of w_0}
\end{prop}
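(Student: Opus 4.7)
The plan is to deduce all three parts from the inversion-set characterization of length recalled just before the proposition, namely
\[
l(a) = \#(\Phi^+ \cap a\Phi^-), \qquad l_K(a) = \#(\Phi^+ \cap a\Phi^- \cap \Phi_K),
\]
together with the single structural fact about the longest element that $w_0 \Phi^+ = \Phi^-$, equivalently $w_0 \Phi^- = \Phi^+$.

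For (1), the first step is to substitute $aw_0$ into the inversion formula to obtain $l(aw_0) = \#(\Phi^+ \cap a w_0 \Phi^-) = \#(\Phi^+ \cap a\Phi^+)$. Partitioning $\Phi^+$ according to the decomposition $\Phi = a\Phi^+ \sqcup a\Phi^-$ then yields
\[
N = \#\Phi^+ = \#(\Phi^+ \cap a\Phi^+) + \#(\Phi^+ \cap a\Phi^-) = l(aw_0) + l(a),
\]
which is (1). For (2) I would run exactly the same argument with every set intersected with $\Phi_K$; no new ingredient is required, since the partition $\Phi = a\Phi^+ \sqcup a\Phi^-$ restricts to the subset $\Phi_K$ automatically. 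The displayed chain of equalities above is then reproduced with the added intersection, giving $\#(\Phi^+\cap \Phi_K) = l_K(aw_0) + l_K(a)$.

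Part (3) is a short consequence of (1). If $a \to b$ then by definition $a = s_\alpha b$ for some root $\alpha$ and $l(a) = l(b)+1$. Right multiplication by $w_0$ gives $aw_0 = s_\alpha(bw_0)$, so the two elements differ by a reflection, while (1) shows that $l(bw_0) - l(aw_0) = l(a) - l(b) = 1$. These are precisely the two conditions that define $bw_0 \to aw_0$, i.e.\ $aw_0 \leftarrow bw_0$.

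There is no serious obstacle: once (1) is proved, (2) and (3) are essentially bookkeeping. The main thing to watch for is the convention from Section~\ref{section: root system definitions} that an arrow $x \to y$ records $l(x) = l(y)+1$, so that right-multiplication by $w_0$ reverses the direction of arrows (as (3) asserts). If a pitfall exists at all, it is only the minor one of remembering to carry the intersection with $\Phi_K$ through every step of the argument in (2), rather than trying to deduce (2) from (1) after the fact.
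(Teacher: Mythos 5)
Your argument is correct and matches the paper's in substance: the paper invokes $w_0C = -C$ while you invoke the equivalent fact $w_0\Phi^+ = \Phi^-$, and both then read off (1) and (2) from the inversion-set formulas for $l$ and $l_K$, with (3) following from (1) together with the observation that right-multiplication by $w_0$ preserves the reflection relation while flipping the length difference.
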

\begin{proof}
    Note that $w_0C=-C$ so we immediately get (1) and (2).
    The last statement also follows, since $a \to b$ implies $b=s_\alpha a$ for a reflection $s_\alpha$ with $\alpha\in \Phi^+$, and $l(aw_0)=N-l(a)=N-l(b)-1=l(bw_0)-1$ and therefore $aw_0 \leftarrow bw_0$.
\end{proof}

Our current goal is to provide a statement of Serre duality between the cohomology groups $H^\bullet \Vconvex$ and $H^\bullet (\Vconvex w_0)$ as defined in section \ref{section: root system definitions}. In Definition \ref{ax: conditions for differentials} we associated a chain complex $C^\bullet \Vconvex$ to any subset $\Vconvex\subset \Weyl$ of the Weyl group satisfying condition \eqref{ax: zeroortwo}. Getting to a duality statement will require us to construct the differential for $\Vconvex w_0$ using a choice of differential on $\Vconvex$.

Let $\Vconvex\subset \Weyl$ be a subset satisfying \eqref{ax: zeroortwo} and fix a differential $d_\Vconvex$ of $C^\bullet \Vconvex$. We then let $d_{\Vconvex w_0}$ be given by $s_{\Vconvex w_0}(xw_0,yw_0) = s_\Vconvex(y,x)$. By Proposition \ref{properties of w_0}, if $u w_0$ and $v w_0$ are two distinct elements in $\{w w_0 \in \Weyl | x w_0 \to w w_0 \to y w_0\}$ then
\begin{align*}
    &s_{\Vconvex w_0}(yw_0,uw_0)s_{\Vconvex w_0}(uw_0,xw_0)+s_{\Vconvex w_0}(yw_0,vw_0)s_{\Vconvex w_0}(vw_0,xw_0)
    \\
    &=s_{\Vconvex}(x,u)s_{\Vconvex}(u,y)+s_{\Vconvex}(x,v)s_{\Vconvex}(v,y)=0 ~,
\end{align*}
so it follows that $d_{\Vconvex w_0}$ satisfies the conditions in Definition \ref{ax: conditions for differentials} for differentials of $C^\bullet (\Vconvex w_0)$.

The following duality lemma will be the main ingredient to our proof of Serre duality for $\nn$ cohomology.
\begin{lem}
    $H^{k} \Vconvex= (H^{N-k} (\Vconvex w_0))^\vee$
    \label{w_0 duality} ~.
\end{lem}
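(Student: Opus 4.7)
The plan is to construct an explicit perfect pairing between $C^\bullet \Vconvex$ and $C^\bullet(\Vconvex w_0)$ (with a length-reversing shift) that intertwines the two differentials, and then pass to cohomology. Right-multiplication by $w_0$ gives a bijection $\Vconvex \xrightarrow{\sim} \Vconvex w_0$, $x \mapsto xw_0$, which by Proposition \ref{properties of w_0}(1) sends an element of length $k$ to one of length $N-k$. Both $\{x\in\Vconvex : l(x)=k\}$ and $\{zw_0\in\Vconvex w_0 : l(zw_0)=N-k\}$ are thus indexed by the same set, so defining on basis vectors
\[
\langle \cdot, \cdot\rangle: C^k\Vconvex \otimes C^{N-k}(\Vconvex w_0) \lto \CC, \qquad \langle x, yw_0\rangle := \delta_{x,y},
\]
yields a perfect pairing, equivalently an isomorphism of graded vector spaces $\phi^k: C^k \Vconvex \xrightarrow{\sim} (C^{N-k}(\Vconvex w_0))^\vee$ sending $x$ to the dual basis vector of $xw_0$.

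The next step is to verify that $\phi$ intertwines $d_\Vconvex$ with the transpose of $d_{\Vconvex w_0}$, i.e.\ that $\langle d_\Vconvex x, yw_0\rangle = \langle x, d_{\Vconvex w_0}(yw_0)\rangle$ on basis vectors. The left side picks out the coefficient of $y$ in $d_\Vconvex x$, which is $s_\Vconvex(x,y)$ if $x\to y$ and zero otherwise. The right side picks out the coefficient of $xw_0$ in $d_{\Vconvex w_0}(yw_0)$, which is $s_{\Vconvex w_0}(yw_0,xw_0)$ if $yw_0\to xw_0$ and zero otherwise. By Proposition \ref{properties of w_0}(3), $x\to y$ in $\Vconvex$ is equivalent to $yw_0\to xw_0$ in $\Vconvex w_0$, and in that case the defining relation $s_{\Vconvex w_0}(yw_0,xw_0)=s_\Vconvex(x,y)$ gives equality. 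Hence $\phi$ is an isomorphism of cochain complexes from $(C^\bullet \Vconvex, d_\Vconvex)$ to $((C^{N-\bullet}(\Vconvex w_0))^\vee, d_{\Vconvex w_0}^\vee)$.

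Taking cohomology and using that duality commutes with cohomology for bounded complexes of finite-dimensional vector spaces, one obtains
\[
H^k \Vconvex \;\cong\; H^k\!\left((C^{N-\bullet}(\Vconvex w_0))^\vee\right) \;\cong\; (H^{N-k}(\Vconvex w_0))^\vee,
\]
as desired. The main obstacle is purely bookkeeping: making sure the relation $s_{\Vconvex w_0}(xw_0,yw_0)=s_\Vconvex(y,x)$ really does present $d_{\Vconvex w_0}$ as the transpose of $d_\Vconvex$ under the bijection $x\leftrightarrow xw_0$, and correctly tracking the degree reversal $k\mapsto N-k$ together with the reversal of Bruhat arrows from Proposition \ref{properties of w_0}(3). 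Once these are aligned, Serre duality for $\Vconvex$ becomes a purely formal consequence.
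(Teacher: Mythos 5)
Your proof is correct and takes essentially the same route as the paper: build the perfect pairing $\langle x, yw_0\rangle = \delta_{x,y}$, check that it intertwines $d_\Vconvex$ with the transpose of $d_{\Vconvex w_0}$ using Proposition~\ref{properties of w_0}(3) and the defining relation for $s_{\Vconvex w_0}$, and then use that dualization commutes with cohomology for finite-dimensional complexes. Your bookkeeping of the adjoint relation $\langle d_\Vconvex x, yw_0\rangle = s_\Vconvex(x,y) = s_{\Vconvex w_0}(yw_0,xw_0) = \langle x, d_{\Vconvex w_0}(yw_0)\rangle$ is in fact cleaner than the intermediate steps in the paper's own computation.
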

\begin{proof}
    Let 
    $B: C^k \Vconvex \times C^{N-k} (\Vconvex w_0) \to \CC$ be the symmetric nondegenerate bilinear form defined on the basis elements $a\in \Vconvex$ and $bw_0\in \Vconvex w_0$ as
    \[
        B(a,bw_0)=\delta_{a,b}~,
    \]
    where $\delta_{a,b}$ denotes the Kronecker delta function.
    From $B$, we get a map $B^\prime: C^k \Vconvex \to (C^{N-k}(\Vconvex w_0))^\vee$ given by $B^\prime(a)(bw_0)=B(a, bw_0)$. Note that $B^\prime$ is a map of chain complexes graded by $k$ since
    \[
        B(d_\Vconvex a, bw_0)
        =B(s_\Vconvex(b,a) b,bw_0)
        =s_\Vconvex(b,a)
        =s_{\Vconvex w_0}(aw_0,bw_0)
        =B(a,d_{\Vconvex w_0}bw_0) ~.
    \]
    Since $B$ is nondegenerate, $B^\prime$ must also be an isomorphism.

    We then claim that the
    $k$th cohomology of $C^{N-\bullet}(\Vconvex w_0)^\vee$ is isomorphic to $H^{N-k} (\Vconvex w_0)^\vee$. Indeed, we have
    \begin{align*}
        H^{k}(C^{N-\bullet}(\Vconvex w_0)^\vee, d^\vee)
        =\frac{Ker(d^\vee)}{Im(d^\vee)}
        =\frac
        {\{v^\vee \in (C^{N-k})^\vee : v^\vee|_{Im (d)} =0\}}
        {\{v^\vee \in (C^{N-k})^\vee : v^\vee|_{Ker(d)}=0 \}}
        \simeq H^{N-k}(\Vconvex w_0)^\vee ~.
    \end{align*}
    
    We have thus proven that
    \[
        H^k \Vconvex 
        \overset{\sim}{\longrightarrow}
        H^{k}(C^{N-\bullet}(\Vconvex w_0)^\vee)
        \simeq
        H^{N-k} (\Vconvex w_0)^\vee
    \]
    is an isomorphism
\end{proof}

We are now ready to prove Serre duality. Let $\Sigma\subset \Phi^+\subset \Phi$ and $\Phi_K\subset \Phi$ be as before. Let $Q\subset X$ be an arbitrary closed $\KC$ orbit with $Q^{op}:=w_0 Q$. Let $\chi_{\lambda+\rho}$ be an arbitrary character, with $\lambda^{op}=w_0(\lambda+\rho)-\rho$.

\begin{thm} \label{thm: Serre duality}
    Suppose $\lambda^{op}=-\lambda-2\rho$ and $Q^{op}$ is the $\KC$ orbit of $x w_0 \borel$ where $x \borel \in Q$. This implies $Q^{op}$ is closed.
    Let $\pi=\pi(Q,\chi_{\lambda+\rho})$ and $\pi^{op}=\pi(Q^{op},\chi_{\lambda^{op}+\rho})$.
    We obtain
    \[
        \dim_\CC H^{N-k}(\nn,\pi)_{-\lambda}
        =
        \dim_\CC H^{k}(\nn,\pi^{op})_{-\lambda^{op}} ~.
    \]
\end{thm}
\begin{proof}
    The fact that $Q^{op}$ is closed follows from \cite[Proposition 7.1.1 Statement 3]{fhw2006} applied to the fact that if a Cartan involution preserves a Borel subalgebra, then it must also preserve the opposite Borel.
    
    Let us write $A^{op}=\{x \in \Weyl | x\borel \in Q^{op}\}$
    and $D^{op}=\{x \in \Weyl | x^{-1}(\lambda^{op}+\rho)\in \bar{C}\}$. We first show that $A^{op}=A w_0$ and $D^{op}=D w_0$, ie. both sets can be obtained by right multiplying the Weyl element.
    
    By construction, there is some $x_0\borel \in Q$ such that $x_0w_0\borel \in Q^{op}$, and both $Aw_0$ and $A^{op}$ are cosets of $\Weyl_{K}$ of the same  element $xw_0$ so this implies $A^{op} = Aw_0$.
    On the other hand, if $x\in D$ then recall that $x^{-1}(\lambda+\rho)\in \bar{C}$ by definition. Since $w_0$ sends the Weyl chamber $C$ to $-C$, we obtain
    \[
        (x w_0)^{-1} (\lambda^{op}+\rho)
        =
        w_0 x^{-1}(-\lambda-\rho)
        =
        -w_0 x^{-1}(\lambda+\rho)
        \in
        -w_0 \bar{C} = \bar{C} ~.
    \]
    Hence, $xw_0\in D^{op}$ so we have shown that $D^{op}\supseteq D w_0$. 
    
    Conversely, if $xw_0 \in D^{op}$ then note that $(xw_0)^{-1} (\lambda^{op}+\rho) \in \bar{C}$. Applying $w_0$ to the left gives us
    \[
        x^{-1}(\lambda^{op}+\rho) \in w_0\bar{C} = -\bar{C} ~,
    \]
    and therefore we obtain
    \[
        x^{-1}(\lambda+\rho)
        =
        x^{-1}(-\lambda^{op}-\rho)
        =-x^{-1}(\lambda^{op}+\rho)
        \in \bar{C} ~.
    \]
    We have just shown $D^{op}\subseteq Dw_0$ so this proves the claim.

    We now want to show that $\Vconvex(A^{op},D^{op},c)=\Vconvex(A,D,N-\dim Q - c)w_0$. Indeed, note that 
    \[
        \Vconvex(A^{op},D^{op},c)
        =
        \Vconvex(A w_0, D w_0, c) ~.
    \]
    But if $xw_0\in \Vconvex(A w_0, D w_0, c)$ (where we note $xw_0\in Aw_0\cap Dw_0$ and we choose $x$ to be in $A\cap D$)
    then
    \begin{align*}
        c=l(xw_0)-l_K(xw_0)
        =N-l(x)-(\dim Q -l_K(x)) ~.
    \end{align*}
    Rearranging gives us
    $l(x)-l_K(x)=N-\dim Q - c$. This implies $x \in \Vconvex(A,D,N-\dim Q - c)$, so we see that 
    $
        \Vconvex(Aw_0,Dw_0,c)\subset \Vconvex(A,D, N-\dim Q - c) w_0
    $.
    We obtain the reverse inclusion by replacing $A$ and $D$ with $Aw_0$ and $Dw_0$. Now, we have
    \[
        \Vconvex(A^{op},D^{op},c)
        =
        \Vconvex(Aw_0,Dw_0,c)
        =
        \Vconvex(A,D, N-\dim Q - c) w_0 ~.
    \]

    We can now invoke all of our previous statements to finish the proof. We have
    \begin{align*}
        \dim H^{N-k}(\nn, \pi^{op})_{-\lambda}
        \overset{\ref{soergel's theorem}}{=}
        &\dim \bigoplus_c H^{2c+\dim Q - N + k} \Vconvex(A^{op},D^{op},c)
        \\=
        &\dim \bigoplus_c H^{2c+\dim Q - N + k} \Vconvex(A,D,N-\dim Q - c) w_0
        \\
        =
        &\dim \bigoplus_c H^{2c+\dim Q - N + k} \Big(\Vconvex(A,D,N-\dim Q - c) w_0\Big)^\vee
        \\
        \overset{\ref{w_0 duality}}{=}
        &\dim \bigoplus_c H^{N-(2c+\dim Q - N + k)} \Vconvex(A,D,N-\dim Q - c)
        \\
        \overset{(*)}{=}
        &\dim \bigoplus_{c^\prime} H^{2c^\prime + \dim Q - k} \Vconvex(A,D,c^\prime)
        =
        \dim H^k (\nn, \pi)_{-\lambda^{op}}
    \end{align*}
    where in the last equality $(*)$ we have set $c^\prime = N-\dim Q - c$.

\end{proof}

The nonvanishing result and Serre duality tell us that if our infinitesimal character is given by $\lambda+\rho$ orthogonal to all roots, then the following holds:
\begin{cor} \label{cor: easy to state non-vanishing}
    There exist TDLDS representations $\pi$ and $\pi^\prime$ such that
    \[
    \dim_\CC H^{\dim_\CC Q}(\nn, \pi)_{-\lambda}
    =
    \dim_\CC H^{N-\dim_\CC Q}(\nn, \pi^\prime)_{\lambda+2\rho}
    \neq 0 ~.
    \]
\end{cor}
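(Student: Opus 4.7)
The plan is a direct combination of the two main theorems of the section. First I would fix a positive root system $\Phi^+$ with $\Sigma \cap \Phi_K = \emptyset$ (available precisely when a TDLDS exists, as in the classification of \cite{Carayol_Knapp}) and an integral weight $\lambda+\rho \in \tC^\vee$ orthogonal to every root in $\Phi$. With $Q$ the closed $\KC$-orbit containing $\borel = \tC \oplus \nn$ and $\pi := \pi(Q,\chi_{\lambda+\rho})$, Theorem \ref{thm: dim Q} immediately yields
\[
H^{\dim_\CC Q}(\nn, \pi)_{-\lambda} \neq 0.
\]

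Second, I would set $\pi' := \pi^{op} = \pi(Q^{op}, \chi_{\lambda^{op}+\rho})$ with $\lambda^{op} := -\lambda - 2\rho$ and apply Theorem \ref{thm: Serre duality} at $k = N - \dim_\CC Q$, obtaining
\[
\dim_\CC H^{\dim_\CC Q}(\nn, \pi)_{-\lambda} = \dim_\CC H^{N-\dim_\CC Q}(\nn, \pi^{op})_{-\lambda^{op}}.
\]
Since $-\lambda^{op} = \lambda + 2\rho$, this is exactly the identity asserted in the corollary, and the right hand side is nonzero because the left hand side is. The TDLDS condition for $\pi^{op}$ is then automatic, because $\lambda^{op} + \rho = -(\lambda + \rho)$ is orthogonal to every root in $\Phi$ whenever $\lambda + \rho$ is.

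The only nontrivial verification I foresee is the closedness hypothesis on $Q^{op}$ in Theorem \ref{thm: Serre duality}. I would dispense with it by observing that since $\TR$ is anisotropic, the Cartan involution $\theta$ acts trivially on $\tC$ and hence preserves every root space; in particular, every Borel subalgebra of $\gC$ containing $\tC$ is automatically $\theta$-stable, so its $\KC$-orbit is closed. Because $w_0$ has a representative in $N_{\GC}(\TC)$, the point $w_0 \borel$ is again a Borel containing $\tC$ (namely the opposite Borel), so $Q^{op} = \KC \cdot (w_0 \borel)$ is closed. This justifies the application of Theorem \ref{thm: Serre duality} and concludes the corollary by stringing together the two displayed equalities.
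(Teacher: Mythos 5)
Your proof is correct and follows the paper's own (implicit, one-line) argument: apply Theorem~\ref{thm: dim Q} to obtain nonvanishing at degree $\dim_\CC Q$, then Theorem~\ref{thm: Serre duality} at $k = N - \dim_\CC Q$ with $\pi' = \pi^{op}$, noting that $-\lambda^{op} = \lambda + 2\rho$ and that $\lambda^{op}+\rho = -(\lambda+\rho)$ is again orthogonal to all roots. Your verification of the closedness of $Q^{op}$ (via $\theta$ acting trivially on $\tC$, so that every Borel containing $\tC$ is $\theta$-stable) is a legitimate and necessary detail that the paper leaves unstated, and it is handled correctly.
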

This concludes our proof of the main results.


\section{The GGP conjecture and TDLDS representations} \label{section: GGP applications}

The degrees in which these $\nn$ cohomology groups are nonzero suggest a geometric interpretation of the GGP conjectures. We will first define the line bundles that will serve as geometric avatars of the $\nn$ cohomology groups that were just studied. We then suggest how the GGP conjectures can be used to relate two different line bundles that each detect different representations. We conclude by specializing to the case of unitary groups where the GGP conjectures were proven.


\subsection{Automorphic representations and line bundles over Griffiths-Schmid varieties} \label{def line bundles and GS}

We want to define the Griffiths Schmid manifolds $GS(\GSch)^{K^\infty}$ and clarify how its cohomology can detect TDLDS representations. We will make a few assumptions that allow our Griffiths Schmid manifolds to be compact.
Let $F$ be a totally real number field, and let $G$ be a reductive group over $F$ with anisotropic center.  
%
%
For simplicity we assume that there is a place $v_0|\infty$ such that $G_{F_{v}}$ is anisotropic for $v|\infty,$ $v \neq v_0$.

%
%

Assume there is an anisotropic maximal torus $T \leq G_{F_{v_0}}$. We will denote $\A_F$ to be the adeles of $F$, and if $S$ is a set of places of $F$ then we shall denote $\A_F^S$ to be the adeles away from $S$ and $F_S=\prod_{v\in S} F_v$.
Let $K^{v_0}=\prod_{v \neq v_0} K_v <G(\A_F^{v_0})$ be a subgroup such that $K_{v}=G(F_v)$ for $v|\infty$ with $v \neq v_0$ and $K^\infty = (K^{v_0})^\infty$ is compact and open in $G(\A_F^\infty).$
Upon taking $K^\infty$ sufficiently small we see that  there exist discrete subgroups $\Gamma_i\leq G(F_{v_0})$ and a decomposition of real manifolds
\begin{align}
    G(F) \backslash G(\A_F) / (T(F_{v_0}) \times K^{v_0}) 
    \simeq \sqcup_{j=1}^n \Gamma_j \backslash G(F_{v_0})/ T(F_{v_0}) ~.
    \label{eq: finiteness of class number}
\end{align}
Note that these manifolds are compact.

We can endow this space with a complex structure as follows. Recalling that $F$ is totally real, there is a unique embedding $F_{v_0} \to \CC$. 
Let $B$ be a Borel subgroup of $G \times_{F_{v_0}}\CC$ 
such that the Lie algebra $\borel$ of its complex points satisfies
$
    \borel=\tC \oplus \nn = \tC \oplus \bigoplus_{\alpha\in \Phi^+} \gC_{-\alpha}
$
as in section \ref{section: Notation}.

This Borel must also satisfy $\BorelC\cap G(F_{v_0}) = T(F_{v_0})$, and will not descend to a Borel in the real group $G(F_{v_0})$. We obtain a Borel embedding, ie. an open embedding of
$
    G(F_{v_0})/T(F_{v_0})
$
into the flag variety $\GC/\BorelC$, and we thus obtain a complex structure on $\Gamma_i\backslash G(F_{v_0})/T(F_{v_0})$ for each $i$. Let
\[
    GS(G)^{K^\infty} = G(F) \backslash G(\A_F) / (T(F_{v_0}) \times K^{v_0}) ~.
\]
It is a locally homogeneous space referred to as a Griffiths Schmid manifold. The complex structure is induced in each of its $\Gamma_i$ components by \ref{eq: finiteness of class number} and the Borel embedding. We shall also write 
\[
    \GS(\GSch) = \varprojlim_{K^{\infty}} \GS(\GSch)^{K^\infty} ~,
\]
where the inverse limit is taken over all neat compact open subgroups.

We will now define holomorphic line bundles over these Griffiths Schmid manifolds whose cohomology allows us to study TDLDS representations. Note that any character $\lambda:T(F_v) \to \CC^\times$ gives rise to a character $\lambda: \BorelC \to \CC^\times$ by extending trivially to $\BorelC$. We can then define a line bundle $\GC \times_{\BorelC, \lambda} \CC$ that fibers holomorphically over $\GC/\BorelC$. The global sections correspond to a maps $s: \GC\to \CC$ that satisfy 
$s(gb)=s(g)\lambda(b)$ for any $g\in \GC$ and $b\in \BorelC$. From the Borel embedding, we get a line bundle $\Lbundle_\lambda$ by pullback
\[
    \begin{tikzcd}
        \Lbundle_\lambda 
            \arrow[r]
            \arrow[d]&
        \GC\times_{\BorelC,\lambda} \CC 
            \arrow[d]
        \\
        G(F_{v_0})/T(F_{v_0})
            \arrow[r]&
        \GC/\BorelC
    \end{tikzcd}
\]
from the embedding of an open subset. After taking arithmetic quotients, we obtain a line bundle of $\Gamma_i$ invariant sections
\[
    \begin{tikzcd}
        \Lbundle_\lambda^{\Gamma_i}
            \arrow[r]
        &
        \Gamma_i \backslash G(F_{v_0}) / T(F_{v_0})
    \end{tikzcd}
\]

We shall abuse notation and denote by $\Lbundle_\lambda$ the induced line bundle on $\GS(\GSch)^{K^\infty}$ obtained from \ref{eq: finiteness of class number}, or the induced line bundle over the inverse limit $\GS(\GSch)=\lim_{\leftarrow}\GS(\GSch)^{K^\infty}$, whenever the context is clear.

For every irreducible admissible representation $\pi^\infty$ of $G(\A_F^\infty)$ let $H^*(GS(G),\mathcal{L}_{\lambda})(\pi^\infty)$ be the $\pi^\infty$ isotypic component.
Assume $G$ is anisotropic. We have the following direct sum decomposition of cohomology in the sense of Matsushima's formula \cite[\S 3.2]{carayol1998}
\begin{align}
    H^*(\GS(\GSch),\Lbundle_\lambda)
    =\bigoplus_\pi
        H^*(\nn,\pi_{v_0})_{-\lambda} \otimes \pi^\infty
    \label{eq: cohomology decomposition}
\end{align}
where the sum is over automorphic representations $\pi$ of $G(\A_F)$ up to multiplicity. The representations $\pi$ that occur in this summation will automatically be cuspidal.  We say a cuspidal automorphic representation $\pi$ of $G(\A_F)$ is TDLDS if $\pi_{v_0}$ is a TDLDS representation and $\pi_v$ is the trivial representation for $v|\infty,$ $v \neq v_0$. We remind the reader that $G(F_v)$ is compact for the Archimedean places $v\neq v_0$ away from $v_0$.

Let $\chi_{\pi_{v_0}}$ be the infinitisimal character of $\pi_{v_0}$. Once again, we identify $\chi_{\pi_{v_0}}=\chi_{\lambda+\rho}$ with an element of $\mathfrak{t}^\vee$ up to the action of the Weyl group.

\begin{lem}
    In the decomposition \eqref{eq: cohomology decomposition} the group $H^*(\nn,\pi_{v_0})_{-\lambda}$ vanishes unless $\chi_{\pi_{v_0}}$ is in the Weyl orbit of $\lambda+\rho.$  In particular, if $H^*(\mathfrak{n},\pi_{v_0})_{-\lambda}$ is nonzero and $\lambda+\rho$ is fixed by the Weyl group, then $\pi_{v_0}$ is a TDLDS representation.
\end{lem}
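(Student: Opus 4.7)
The plan is to reduce both statements to the Casselman-Osborne lemma, already invoked in Section \ref{section: Parametrization of LDS}, together with a classification of the archimedean components of cuspidal automorphic representations that contribute to coherent cohomology of a compact Griffiths-Schmid manifold.

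For the first assertion, I would repeat the Casselman-Osborne argument verbatim. The representation $\pi_{v_0}$ has some infinitesimal character $\chi_{\pi_{v_0}}$, which under the Harish-Chandra isomorphism corresponds to a Weyl orbit $W\mu \subset \tC^\vee$. Then every $\tC$-weight appearing in $H^\bullet(\nn, \pi_{v_0})$ must lie in $W\mu - \rho$. Requiring the weight subscript $-\lambda$ to appear forces a relation between $-\lambda$ and $W\mu - \rho$; the same manipulation used at the end of Section \ref{section: Parametrization of LDS} to reduce to the $H^\bullet_{-\lambda}$ component translates this to the condition that $\chi_{\pi_{v_0}}$ lies in the Weyl orbit of $\lambda + \rho$. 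This gives the vanishing statement.

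For the second assertion, assume $H^*(\nn, \pi_{v_0})_{-\lambda} \neq 0$ and that $\lambda + \rho$ is Weyl-invariant. By the first part, the Weyl orbit of $\chi_{\pi_{v_0}}$ consists of the single element $\lambda + \rho$, so the infinitesimal character of $\pi_{v_0}$ is totally degenerate. It remains to identify $\pi_{v_0}$ as being of the form $\pi(Q, \chi_{\lambda+\rho})$. For this I would invoke two ingredients: first, $\pi$ is automatically cuspidal because the ambient quotient $G(F) \backslash G(\A_F)$ is compact (as noted in Section \ref{def line bundles and GS}); second, the archimedean components of cuspidal automorphic representations contributing nontrivially to $H^*(\GS(\GSch), \Lbundle_\lambda)$ via the Matsushima-type decomposition must be limits of discrete series, since $G_{F_{v_0}}$ carries an anisotropic maximal torus $T$ and the relevant $\nn$-cohomology of a unitary $(\g,K)$-module vanishes outside this class. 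Combined with the totally degenerate infinitesimal character, this identifies $\pi_{v_0}$ as a TDLDS in the sense of Section \ref{section: Parametrization of LDS}.

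The main obstacle is the second step: Casselman-Osborne alone pins down the infinitesimal character but not the representation class, so one must argue separately that $\pi_{v_0}$ lies among the limits of discrete series. This leans on external classification results about archimedean components of cohomological cuspidal automorphic representations, rather than on the combinatorial machinery of Soergel developed in the body of the paper; the rest of the lemma is then essentially a bookkeeping consequence of the Harish-Chandra parametrization of central characters.
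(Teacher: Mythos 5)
Your first assertion is handled correctly via Casselman--Osborne and is exactly what the paper does; the paper's entire proof is the citation \cite[Corollary 2.7]{co1975}.

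The interesting part of your proposal is the second assertion, where you correctly observe that Casselman--Osborne only determines the infinitesimal character and not the representation class, so one still needs to identify $\pi_{v_0}$ with some $\pi(Q,\chi_{\lambda+\rho})$ for a closed $\KC$-orbit $Q$. The paper's one-line citation does not visibly address this either, so your concern is legitimate. However, the patch you propose is itself unjustified: you assert that an archimedean component contributing nontrivially to the Matsushima decomposition must be a limit of discrete series because ``the relevant $\nn$-cohomology of a unitary $(\g,K)$-module vanishes outside this class,'' but you cite no source, and the statement is not true in that generality --- the trivial representation, for instance, has nonzero $H^0(\nn,\cdot)$ yet is not a limit of discrete series (it is excluded here only because its infinitesimal character $\chi_\rho$ is not Weyl-fixed unless $G$ is abelian). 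What you actually need is the sharper statement confined to totally degenerate infinitesimal character: that at such a character, the only irreducible unitary Harish-Chandra modules with nonzero $H^*(\nn,\cdot)_{-\lambda}$ are those of the form $\pi(Q,\chi_{\lambda+\rho})$. That is plausible and may well be what the paper is implicitly invoking, but you flag it as an obstacle and then lean on it without resolving it, which leaves the second assertion open in your write-up.
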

\begin{proof}
    This is a reformulation of \cite[Corollary 2.7]{co1975}
\end{proof}

Now let us restrict attention to unitary groups. Let $E/F$ be a CM extension of number fields, and $V_{n+1} \cong E^{n+1}$ be a vector space equipped with a Hermitian form. Let 
\begin{align}
\mathrm{U}_{n+1}:=\mathrm{U}(V_{n+1})
\end{align} be the unitary group over $F$ that preserves the Hermitian form on $V_{n+1}$.

As above, we assume that there is a place $v_0|\infty$ such that $\mathrm{U}_{n+1,F_{v_0}}$ is quasi-split and $\mathrm{U}_{n+1,F_{v}}$ is anistropic for any Archimedean place $v \neq v_0$.
Let $V_n<V_{n+1}$ be a subspace of codimension $1$ over $E$ such the Hermitian form on $V_{n+1}$ is nondegenerate when restricted to $V_n.$ We assume that $\mathrm{U}_n:=\mathrm{U}(V_n)$ has the property that $\mathrm{U}_{n,F_{v_0}}$ is quasi-split.  Since $\mathrm{U}_{n+1,F_{v}}$ is anisotropic for any Archimedean place $v \neq v_0$, the same holds for $\mathrm{U}_n.$

Let $V_{n+1}$ be another Hermitian vector space that admits an embedding of $V_n$ as a nondegenerate subspace of $V_{n+1}$. We can make appropriate assumptions on the signature of $V_{n+1}$ and the embedding of $V_n$ such that $\mathrm{U}_{n+1}$ is also quasi-split with an embedding $\mathrm{U}_n \hookrightarrow \mathrm{U}_{n+1}.$ 
We assume that there is a place $v_0|\infty$ such that $U_{n+1,F_{v_0}}$ and $U_{n,F_{v_0}}$ are quasi-split, and $U_{n+1, F_{v}}$ and $U_{n,F_v}$ are anistropic for any other Archimedean place $v \neq v_0.$

We choose Borel subgroups $B_n$ and $B_{n+1}$ of the complexifications $\mathrm{U}_n \times_{F_{v_0}}\CC$ and $\mathrm{U}_{n+1} \times_{F_{v_0}}\CC$ compatibly in the sense that $B_{n+1} \cap (\mathrm{U}_{n} \times_{F_{v_0}}\CC) =B_n$. We shall assume each $B_m(\CC)$ contains the complex points $T_m(\CC)\subseteq B_m(\CC)$ of an anisotropic maximal torus $T_m\subseteq \mathrm{U}_{m}$ for $m=n,n+1$. With respect to these choices, we shall denote $\Phi_m, \borel_m, \tC_m, \nn_m$, etc., in the same manner as earlier in this section.

Suppose we have characters $\lambda_{n+1}: \TSch_{n+1}(F_{v_0})\to \CC^\times$ and $\lambda_n : \TSch_n(F_{v_0}) \to \CC^\times$, chosen compatibly such that $\lambda_{n+1}|_{\TSch_{n}}=\lambda_n$. Our embeddings give us an inclusion of line bundles
\[
    \begin{tikzcd}
        \Lbundle_{\lambda_{n}}
            \arrow[r]
            \arrow[d]
            &
        \Lbundle_{\lambda_{n+1}}
            \arrow[d]
            \\
        \GS(U_{n})
            \arrow[r]
            &
        \GS(U_{n+1})
    \end{tikzcd}
\]
and in the reverse direction we get a corresponding map at the level of cohomology
\begin{align}
    \label{restr:map}
    H^*(\GS(\mathrm{U}_{n+1}), \Lbundle_{\lambda_{n+1}}) 
    \longrightarrow
    H^*(\GS(\mathrm{U}_{n}), \Lbundle_{\lambda_n}) ~.
\end{align}

We can then decompose this into a map of isotypic subspaces
\begin{align}
    \bigoplus_{\pi} H^*(\GS(\mathrm{U}_{n+1}),\Lbundle_{\lambda_{n+1}})(\pi^\infty)
    \longrightarrow
    \bigoplus_{\pi^\prime} H^*(\GS(\mathrm{U}_n),\Lbundle_{\lambda_n})((\pi^\prime)^\infty)~.
    \label{cohomology map isotypic decomposition}
\end{align}

Applying the Matsushima formula of \eqref{eq: cohomology decomposition} we get a map

\begin{align}
    \bigoplus_{\pi} H^*(\nn_{n+1},\pi_{v_0})_{-\lambda_{n+1}} \otimes \pi^\infty
    \longrightarrow
    \bigoplus_{\pi^\prime} H^*(\nn_{n},\pi^\prime_{v_0})_{-\lambda_n} \otimes (\pi^\prime)^\infty ~.
    \label{cohomology map after Matsushima}
\end{align}

\subsection{Context from n-cohomology.}

Now take $\rho_{n+1},\rho_n$ to be the half sums of positive roots and $\nn_{n+1}, \nn_n$ to be the negative root spaces for $\mathrm{U}_{n+1},\mathrm{U}_n$ respectively. From the earlier discussion of Borel subgroups, we had that $\nn_{n+1}$ restricts to $\nn_n$.
Suppose $\lambda_{n+1}+\rho_{n+1}$ and $\lambda_n+\rho_n$ are orthogonal to all roots for the respective root systems of $\mathrm{U}_{n+1}$ and $\mathrm{U}_{n}$. In view of \eqref{cohomology map after Matsushima}, the following is a necessary condition for \eqref{restr:map} to be nonzero: 

\begin{thm} \label{thm:nonvanish}
There exists $q \in \ZZ$ and TDLDS representations $\pi_{v_0}$ and $\pi^\prime_{v_0}$ of $\mathrm{U}_{n+1}(F_{v_0})$ and $\mathrm{U}_{n}(F_{v_0}),$ respectively, such that 
\[
    H^q(\nn_{n+1},\pi_{v_0})_{-\lambda_{n+1}} \neq 0 \text{ and }
    H^q(\nn_{n}, \pi^\prime_{v_0})_{-\lambda_n} \neq 0 ~.
\]
\end{thm}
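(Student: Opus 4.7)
\textit{Proof plan.} The plan is to combine the nonvanishing result of Theorem \ref{thm: dim Q} with the Serre duality of Theorem \ref{thm: Serre duality}, applied separately to each of $\mathrm{U}_{n+1}$ and $\mathrm{U}_n$. For each $m \in \{n, n+1\}$ these two theorems produce two canonical degrees at which some TDLDS of $\mathrm{U}_m(F_{v_0})$ has nonvanishing $-\lambda_m$-weight $\nn_m$-cohomology, and the theorem will then follow by matching one degree from each group.

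Concretely, write $N_m := \#\Phi^+_m$ and $d_m := \#(\Phi^+_m \cap \Phi_{K,m})$. Choosing for each $m$ a system of positive roots all of whose simple roots are non-compact (possible for quasi-split $U(p,q)$ by interleaving the indices of the two compact blocks) and letting $Q_m$ be the closed $K$-orbit containing $\tC_m \oplus \nn_m$, we have $\dim_\CC Q_m = d_m$. Theorem \ref{thm: dim Q} applied to the TDLDS $\pi(Q_m, \chi_{\lambda_m + \rho_m})$ then yields nonvanishing at degree $d_m$. Applying Theorem \ref{thm: dim Q} instead to $\pi(Q_m, \chi_{-\lambda_m - \rho_m})$ (still orthogonal-dominant, since $\lambda_m + \rho_m$ is) and then invoking Theorem \ref{thm: Serre duality} produces a second TDLDS $\pi(Q_m^{op}, \chi_{\lambda_m + \rho_m})$ whose $-\lambda_m$-weight cohomology is nonzero at the dual degree $N_m - d_m$.

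The remaining step is combinatorial. Since $\mathrm{U}_m$ is quasi-split at the real place $v_0$, we have $\mathrm{U}_m(F_{v_0}) \simeq U(p,q)$ with $p + q = m$ and $|p - q| \le 1$, giving $d_m = \binom{p}{2} + \binom{q}{2}$ and $N_m - d_m = pq$ independently of the choice of positive root system. A short case analysis on the parity of $n+1$ then establishes the identity $d_{n+1} = N_n - d_n$: both sides equal $k(k-1)$ when $n+1 = 2k$ and $k^2$ when $n+1 = 2k+1$. Taking $q = d_{n+1} = N_n - d_n$, $\pi'_{v_0} = \pi(Q_{n+1}, \chi_{\lambda_{n+1}+\rho_{n+1}})$, and $\pi_{v_0} = \pi(Q_n^{op}, \chi_{\lambda_n + \rho_n})$ then completes the proof. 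The essential analytic content is already contained in Theorems \ref{thm: dim Q} and \ref{thm: Serre duality}, so the main obstacle is just this combinatorial bookkeeping across the parities of $n$ and the verification that the necessary orthogonality conditions on the $\lambda_m$'s really produce TDLDS representations on each side.
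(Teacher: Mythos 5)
Your proof is correct and takes essentially the same approach as the paper: direct nonvanishing at degree $\dim_\CC Q_m$ from Theorem \ref{thm: dim Q}, Serre duality from Theorem \ref{thm: Serre duality} giving nonvanishing at $N_m-\dim_\CC Q_m$, and a dimension count for quasi-split unitary groups matching the two degrees via $\dim_\CC Q_{n+1}=N_n-\dim_\CC Q_n$. Your bookkeeping with $d_m=\binom{p}{2}+\binom{q}{2}$ and the parity case analysis is equivalent to the paper's signature computation using $(a-b)^2=a-b$; one small improvement is that you correctly apply Serre duality on the $U_n$ side, producing $\pi(Q_n^{op},\chi_{\lambda_n+\rho_n})$, whereas the paper's text says it applies Serre duality to $U_{n+1}$ and then never uses the resulting degree $\dim X_{n+1}-\dim Q_{n+1}$ (which in fact does not equal $\dim Q_n$).
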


\begin{proof}
    For $m=n,n+1$ let $\nn_m$ and $Q_m\subset X_m ...$ etc. be attached to $G=U_m$ as introduced in the previous sections. Note that $\dim_\CC Q_m$ does not depend on the choice of $Q_m$ as an orbit for the complexified maximal compact of the rank $m$ unitary group.
    By Corollary \ref{cor: easy to state non-vanishing}, there exist TDLDS representations $\pi_{v_0}$ and $\pi^\prime_{v_0}$ such that
    \[
        H^{\dim Q_{n+1}}(\nn_{n+1},\pi_{v_0})_{-\lambda_{n+1}}\neq 0
        \text{
        and
        }
        H^{\dim X_n - \dim Q_{n}}(\nn_{n},\pi^\prime_{v_0})_{-\lambda_{n}}\neq 0
    \]
    Now let $(a,b)$ be the signature of the rank $n=a+b$ Hermitian form defining $U_n$. Since $U_n$ and $U_{n+1}$ are quasi split, we can take $a=b$ or $b+1$, and assume $U_{n+1}$ has signature $(a,b+1)$. From formulas of \cite{mcgovern2012} and the fact that $a-b = (a-b)^2$ we get
    \begin{align*}
        \dim_\CC Q_{n+1} 
        &= \frac{a(a-1)+(b+1)b}{2} =ab
        \\
        &=
        \frac{n(n-1)}{2} - \frac{a(a-1)+b(b-1)}{2}
        = \dim_\CC X_{n}-\dim_\CC Q_{n} ~.
    \end{align*}
\end{proof}

\subsection{Compatibility with the Gan-Gross-Prasad conjecture}

The Gan-Gross-Prasad conjectures implies that there are global and local obstructions to the nonvanishing of \eqref{restr:map}. In this section, we will make this specific by relating $\eqref{restr:map}$ to the global period integral that occurs in the conjectures, and then discussing the local period integrals.

Let us write $[\mathrm{U}_n]=\mathrm{U}_n(F)\backslash \mathrm{U}_n(\A_F)$ where we recall $U_n$ is assumed anisotropic at the Archimedean places $v\neq v_0$. We first note the global obstruction in the following proposition.

\begin{prop}
    If \eqref{restr:map} is nonzero then there exist cuspidal automorphic TDLDS representations $\pi$ and $\pi^\prime$, and automorphic forms $\varphi \in \pi$ and $\varphi^\prime \in \pi^\prime$ such that
    \begin{align} \label{period}
    \int_{[\mathrm{U}_n]}\varphi(g)\varphi^\prime(g)dg \neq 0 ~.
    \end{align}
    Moreover, if \eqref{period} is nonzero then
    \begin{align} \label{central:value}
    L(\tfrac{1}{2},\widetilde{\pi} \otimes \widetilde{\pi}^\prime,r_{n+1} \otimes r_n) \neq 0 ~,
    \end{align}
    where we let $r_n:{}^L \mathrm{U}_n \lto \mathrm{Res}_{E/F}\GL_n$ denote the usual base change $L$-map \cite[\S 7]{GGP}
\end{prop}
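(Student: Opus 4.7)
The approach is to prove the two implications separately. The second, that nonvanishing of \eqref{period} implies nonvanishing of \eqref{central:value}, is exactly the global Gan--Gross--Prasad conjecture for the pair $(\mathrm{U}_n, \mathrm{U}_{n+1})$. Since $\pi$ and $\pi'$ are cuspidal and \eqref{period} is the standard Bessel period for this pair, this step follows from the work of Beuzart-Plessis, Chaudouard, Zydor and others, who have established the GGP conjecture in sufficient generality for our setting.

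For the first implication, I would begin with a class $\omega \in H^*(\GS(\mathrm{U}_{n+1}), \Lbundle_{\lambda_{n+1}})$ whose image under \eqref{restr:map} is nonzero. Applying the Matsushima decomposition \eqref{eq: cohomology decomposition} to both sides, naturality of the restriction map produces cuspidal automorphic representations $\pi'$ of $\mathrm{U}_{n+1}(\A_F)$ and $\pi$ of $\mathrm{U}_n(\A_F)$ for which the induced map on isotypic components is nonzero. The preceding lemma, together with the assumption that $\lambda_m+\rho$ is fixed by the Weyl group of $\mathrm{U}_m$, then forces $\pi_{v_0}$ and $\pi'_{v_0}$ to be TDLDS, so that $\pi$ and $\pi'$ are cuspidal automorphic TDLDS representations in the sense of Section \ref{def line bundles and GS}.

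The next step is to convert the nonvanishing of this map on isotypic components into nonvanishing of the period integral. Realizing cohomology classes as $\bar\partial$-closed $\Lbundle_{\lambda_m}$-valued $(0,q)$-forms, pullback along the inclusion $\iota\colon\GS(\mathrm{U}_n) \hookrightarrow \GS(\mathrm{U}_{n+1})$ corresponds to the restriction of the associated automorphic function from $\mathrm{U}_{n+1}(\A_F)$ to $\mathrm{U}_n(\A_F)$. Using compactness of $\GS(\mathrm{U}_n)$ and Serre duality, I would pair the restricted class against a Serre-dual class; compatibility of this pairing with integration over the fundamental domain $[\mathrm{U}_n]$ would identify the resulting complex number with \eqref{period} for an appropriate choice of $\varphi \in \pi$ and $\varphi' \in \pi'$.

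The main obstacle will be making this last identification rigorous. Carrying it out requires tracking Dolbeault representatives through the Matsushima decomposition, verifying that Serre duality on $\GS(\mathrm{U}_n)$ is compatible, after suitable twisting by the line bundles, with the $L^2$-pairing of automorphic forms over $[\mathrm{U}_n]$, and checking nondegeneracy of the resulting pairing on the relevant isotypic components so that nonvanishing of restriction genuinely produces a nonzero period. This bookkeeping, involving line-bundle conventions and infinitesimal-character normalizations, is where the proof will demand the most care.
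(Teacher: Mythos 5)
Your proposal follows the paper's proof essentially step for step: the second implication is read off from the global GGP conjecture, while the first is obtained by decomposing the restriction map via the Matsushima formula \eqref{eq: cohomology decomposition}, invoking the preceding lemma to identify the relevant local components as TDLDS, and then using compactness of $\GS(\mathrm{U}_n)$ together with Serre duality to pair the restricted class with a dual class and unwind that pairing into the period integral over $[\mathrm{U}_n]$. The bookkeeping concerns you raise at the end (Dolbeault representatives, compatibility of Serre duality with the $L^2$-pairing, line-bundle twists) are real but are also left largely implicit in the paper's own argument.
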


\begin{proof}
    The second equation \eqref{central:value} is one direction of the Gan-Gross-Prasad conjectures which is a theorem for unitary groups. On the other hand, by our assumptions we have that $[\mathrm{U}_n]$ is compact. Let us consider a nonzero cohomology class
    \begin{align*}
        \sum_{i} \varphi_i\wedge \omega_i \in H^q(\GS(\mathrm{U_{n+1}}),\Lbundle_{\lambda_{n+1}}) ~.
    \end{align*}
    Here, we have used \ref{eq: cohomology decomposition} to obtain simple tensors $\varphi_i=\otimes_v \varphi_{i,v} \in \pi$ such that $\sum_{i} \varphi_{i,v_0}\wedge \omega_i \in H^q(\nn,\pi_{v_0})$ represents a nonzero $\nn$ cohomology class.

    By assumption, this class can be chosen such that its restriction to the smaller group
    \begin{align*}
        \sum_{i} (\varphi_i\wedge \omega_i)|_{\GS(\mathrm{U}_n)} \in H^q(\GS(\mathrm{U}_n),\Lbundle_{\lambda_n})
    \end{align*}
    is nonzero. Since $[U_n]$ is compact, we can apply Serre duality for complex compact analytic manifolds such that this restricted cohomology class pairs nontrivially with another nonzero cohomology class
    \begin{align*}
        \sum_{j} \varphi^\prime_j \wedge \omega^\prime_j \in  H^{\mathrm{\dim_{\CC} \GS(\mathrm{U_n})}-q}(\GS(\mathrm{U}_n),\Lbundle_{-\lambda_{n}-2\rho_n})
    \end{align*}
    constructed in the same manner using \ref{eq: cohomology decomposition}. Here we assumed $\lambda_n+\rho_n$ is fixed by the Weyl group, which implies $-\lambda_n-\rho_n=(-\lambda_n-2\rho_n)+\rho_n$ is also fixed by the Weyl group. Therefore, the Serre dual cohomology class obtained must also be TDLDS.
    
    We have thus chosen $\varphi=\varphi_i$ and $\varphi^\prime=\varphi^\prime_j$ such that for some compact open subgroup $K^\infty\leq \mathrm{U}_n(\A_F^\infty)$,
    \begin{align*}
        \int_{[U_n]} \varphi(g) \varphi^\prime(g) dg = \mathrm{Vol}(T(F_{v_0})\times K^{v_0})\int_{\GS(\mathrm{U}_n)^{K^\infty}} \varphi(g) \varphi^\prime(g) dg \neq 0
    \end{align*}
    where $K^{v_0}=\prod_{v_0\neq v | \infty} \mathrm{U}_n \times K^\infty$, so we are done.
\end{proof}

We now want to relate the period integral in the above proposition to local periods at the Archimedean places where the TDLDS occur.
Let $\pi^\prime_v$ and $\pi_v$ be the local unitary representations that occur in the factorization of the automorphic representations
\begin{align*}
    \pi^\prime=\otimes^\prime \pi^\prime_v
    \text{ and }
    \pi=\otimes^\prime \pi_v ~,
\end{align*}
and we use the same constructions as before such that $\pi^\prime_{v_0}$ and $\pi_{v_0}$ are TDLDS for $\mathrm{U}_n$ and $\mathrm{U}_{n+1}$, respectively. Note that since TDLDS are unitary representations, they come equipped with their respective inner products $(\cdot,\cdot)_{\pi^\prime_{v_0}}$ and $(\cdot,\cdot)_{\pi_{v_0}}$, which provide us with the matrix coefficients in the following proposition.

\begin{prop}
    If \eqref{period} is nonzero then there exist $\varphi_0\in \pi_{v_0}$ and $\varphi_0^\prime \in \pi^\prime_{v_0}$, such that the local period for TDLDS satisfies
    \begin{align} \label{local:period}
        \int_{\mathrm{U_n}(F_{v_0})} (\pi_{v_0}(g)\varphi_0,\varphi_0)_{\pi_{v_0}}(\pi^\prime_{v_0}(g)\varphi_0^\prime,\varphi_0^\prime)_{\pi^\prime_{v_0}} dg \neq 0~.
    \end{align}
\end{prop}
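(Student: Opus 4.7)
The plan is to invoke the local--global factorization of automorphic periods given by the refined Gan--Gross--Prasad conjecture (the Ichino--Ikeda formula for unitary groups), which has been established in substantial generality by Beuzart-Plessis, W. Zhang, Chaudouard, Zydor, and Liu. This formula expresses $|\mathcal{P}(\varphi,\varphi^\prime)|^2$, suitably normalized by Petersson inner products, as a product of a special $L$-value and the local matrix coefficient integrals at each place. Given the assumed nonvanishing of the global period, each local factor must then be nonzero, and specializing to $v_0$ yields \eqref{local:period}.

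Concretely, I would first write $\varphi$ and $\varphi^\prime$ as finite sums of pure tensors under the restricted tensor product decompositions $\pi = \otimes^\prime_v \pi_v$ and $\pi^\prime = \otimes^\prime_v \pi^\prime_v$. Nonvanishing of the global period in \eqref{period} then produces a pair of pure tensors $\otimes_v \varphi_v$ and $\otimes_v \varphi^\prime_v$ whose period remains nonzero. Setting $\varphi_0 := \varphi_{v_0}$ and $\varphi^\prime_0 := \varphi^\prime_{v_0}$, I would apply the Ichino--Ikeda factorization to conclude that the local matrix coefficient integral at $v_0$ does not vanish for this choice of test vectors.

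Convergence of \eqref{local:period} must be verified at the outset. Since a TDLDS representation is a limit of discrete series, it is tempered, so matrix coefficients satisfy the Harish-Chandra estimate $|(\pi_{v_0}(g)\varphi_0, \varphi_0)_{\pi_{v_0}}| \ll \Xi_{\mathrm{U}_n}(g)(1+\sigma(g))^N$, and the analogous estimate holds for $\pi^\prime_{v_0}$ restricted to $\mathrm{U}_n(F_{v_0})$. Standard integrability estimates for products of Harish-Chandra $\Xi$-functions in this codimension-one setting, as used in Beuzart-Plessis's work, give absolute convergence of the integral over the smaller quasi-split unitary group.

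The main obstacle is confirming that the Ichino--Ikeda formula applies in the TDLDS case at $v_0$, since the cleanest formulations assume everywhere-tempered cuspidal data of discrete series type at infinity, whereas TDLDS sit at the boundary of the discrete series. One needs to verify that the local multiplicity-one theorem of Sun--Zhu and the local period analysis of Beuzart-Plessis remain valid for limits of discrete series with singular infinitesimal character. An alternative route avoiding the full Ichino--Ikeda formula is to argue that nonvanishing of the global period produces a nonzero element of the global Hom space, which factorizes through the local Hom spaces by multiplicity one; the local GGP conjecture at Archimedean places, proved by Beuzart-Plessis, then identifies nonvanishing of the local Hom space at $v_0$ with existence of test vectors making \eqref{local:period} nonzero.
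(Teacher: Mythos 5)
The paper states this proposition without proof: after the statement the text merely remarks that the integral in \eqref{local:period} converges by \cite{harris2012}, and moves on. You have therefore supplied an argument where the source gives none, so the comparison is between your proof and silence.

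Your argument is essentially correct and is the standard way one would establish the local nonvanishing. Both of your routes are sound. The first (direct Ichino--Ikeda factorization after reducing to pure tensors by bilinearity) is logically clean, and the pure-tensor reduction is a valid step since the global period is a continuous bilinear functional and pure tensors span a dense subspace of $\pi \otimes \pi^\prime$. Your second route --- nonvanishing of the global period forces nonvanishing of each local $\mathrm{Hom}_{\mathrm{U}_n(F_v)}(\pi_v \otimes \pi^\prime_v, \CC)$ by multiplicity one, and Beuzart-Plessis's local Gan--Gross--Prasad theorem for tempered Archimedean representations identifies nonvanishing of the Hom space with nonvanishing of the local matrix-coefficient integral for suitable test vectors --- is the more robust choice here. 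It avoids invoking the full global refined conjecture and sidesteps any worry about whether the Ichino--Ikeda formula, in the precise generality currently proven, covers the automorphic data at hand. Your caution about whether the extant results apply at TDLDS is understandable but somewhat overcautious: TDLDS are unitary, hence tempered, and the local theorems you cite are formulated for tempered representations rather than for discrete series specifically, so they do cover this case. The convergence estimate you give via the Harish-Chandra $\Xi$-function is also correct and parallels the paper's appeal to \cite{harris2012}. In short, your proposal fills a genuine gap in the paper and does so correctly; if anything, the second route should be promoted to the primary argument.
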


We note that the integral in \eqref{local:period} is convergent by \cite{harris2012}. We want to interpret this at the level of $\nn$ cohomology. Note that if we are given an intertwining map $\pi_{v_0} \to \pi^\prime_{v_0}$, then the inclusion $\nn_{n}\hookrightarrow \nn_{n+1}$ induced by the inclusion of groups $U_n \to U_{n+1}$ induces a restriction map at the level of $\nn$-cohomology
\begin{align} \label{n cohomology restriction map}
    H^*(\nn_{n+1}, \pi_{v_0}) \to H^*(\nn_n,\pi^\prime_{v_0}) ~.
\end{align}
We conclude by posing the following question:
\begin{question}
    Is the nonvanishing of the local period in \eqref{local:period} equivalent to the nonvanishing of the $\nn$ cohomology restriction map in \eqref{n cohomology restriction map}?
\end{question}

\section{A cohomological intertwining map for the TDLDS of special unitary groups of ranks 3 and 2}
\label{section: low rank GGP}

In this section, we provide evidence for the relationship between local GGP and $\nn$ cohomology for low rank unitary groups. However, we warn that the results given are only for special unitary groups.
Our goal is to prove that there exists an intertwining map from the TDLDS of $SU(2,1)$ considered in \cite{carayol1998} to a limit of discrete series of $SU(1,1)$ that induces a non-zero map on cohomology. Conversely, there is only one such representation of $SU(1,1)$ for which such a cohomological intertwining map can exist.

Specifically,
we take $\pi$ to be the Harish-Chandra module of the unique TDLDS of $SU(2,1)$. Take $\pi^\prime$ to be the Harish-Chandra module of any irreducible admissible representation of $SU(1,1)$. Take $\gC= \CC \otimes_{\RR} \mathrm{Lie}(SU(2,1))$ and $\gC^\prime = \CC \otimes_{\RR} \mathrm{Lie}(SU(1,1))$ to be the complexifications of the Lie algebra of each respective group. As before, we will consider certain negative root spaces $\nn\subset \gC$ and $\nn^\prime=\nn\cap \gC^\prime$. We will then consider a morphism of $\gC^\prime$ modules $J: \pi \to \pi^\prime$. If $J$ induces a non-vanishing restriction map of cohomology
\begin{align*}
    H^*(\nn,\pi) \to H^*(\nn^\prime , \pi^\prime).
\end{align*}
then we will show that $\pi^\prime$ must necessarily be an anti-holomorphic limit of discrete series for $SU(1,1)$. We will then explicitly construct a morphism $J$ that makes the cohomology restriction map non-zero.
This is the main result of the section, and is proven in Theorem \ref{Main Thm: Low rank GGP}.

\subsection{The construction of SU(2,1)}
Let 
\begin{align*}
    U_{Q}(\RR)=\{g\in \GL_3(\CC) | g^\dagger Q g Q^{-1} = 1 \}
\end{align*}
where $\dagger$ denotes conjugate transpose and $Q$ is the matrix
\[
    Q=\begin{pmatrix}
        1&&\\
        &1&\\
        &&-1\\
    \end{pmatrix}~.
\]
Let $G_\RR=SU_Q(\RR)$ be elements of $U_Q(\RR)$ with determinant 1. Since $U_Q(\RR)\subset \GL_3(\CC)$, we identify the complexification $\CC\otimes_\RR \mathrm{Lie}(U_Q(\RR))=M_3(\CC)$ with the $3\times 3$  complex matrices. We can further identify $\gC = \mathrm{Lie}(G_\RR)\otimes_\RR \CC$ with the trace zero elements of $M_3(\CC)$. Using this identification, we take a maximal compact subgroup $K_\RR=S(U(2)\times U(1))$, such that the complexified Lie algebra $\kC:= Lie(K_\RR)\otimes_\RR \CC$ is given inside $M_3(\CC)$ as the trace zero block diagonal matrices
\begin{align*}
    \kC = \begin{psmatrix}
        *&*&\\
        *&*&\\
        &&*
    \end{psmatrix} \cap \gC.
\end{align*}
Furthermore, we see that the trace zero diagonal elements
\begin{align*}
    \tC = \begin{psmatrix}
        *&&\\
        &*&\\
        &&*
    \end{psmatrix}\cap \gC
\end{align*}
give the complexification of a compact maximal torus. Let $\Phi=\Phi(\gC,\tC)$ be the resulting root system. We define the following matrices
\begin{align*}
    X_{-\alpha} = \begin{psmatrix}
        0&0&0\\
        0&0&0\\
        0&1&0
    \end{psmatrix}
    ,~
    X_{-\beta} = \begin{psmatrix}
        0&0&1\\
        0&0&0\\
        0&0&0
    \end{psmatrix}
    ,~ \text{ and }
    X_{-\alpha-\beta} = \begin{psmatrix}
        0&1&0\\
        0&0&0\\
        0&0&0
    \end{psmatrix}.
\end{align*}
We then let $\Phi^+=\{\alpha,\beta,\alpha+\beta\}\subset \Phi$ be the resulting system of positive roots relative to $\tC$. The negative root spaces are
\begin{align*}
    \gC_{-\alpha} = \CC. X_{-\alpha}
    ,~ 
    \gC_{-\beta} = \CC. X_{-\beta}
    ,~ \text{ and }
    \gC_{-\alpha-\beta} = \CC. X_{-\alpha-\beta}.
\end{align*}
Let $X_{\alpha}, X_{\beta}, X_{\alpha+\beta}$ respectively denote the transpose matrices, generating positive root spaces $\gC_{\alpha}, \gC_{\beta}, \gC_{\alpha+\beta}$.
From the description of $\kC$, note that $\alpha,\beta$ give the noncompact simple roots, and $\alpha+\beta$ are compact.

\subsection{The embedding of SU(1,1) inside SU(2,1)}
We will provide a construction of $G^\prime_\RR = SU(1,1)$ and its root systems such that $\gC_{\alpha}$ restricts nontrivially, and the half sum of positive roots for the larger group restricts to the half sum of positive roots for the smaller group.

Let $Q^\prime=\begin{psmatrix}
    1&\\
    &-1
\end{psmatrix}$ and define
\begin{align*}
    U_{Q^\prime}(\RR) = \{g\in \GL_2(\CC) | g^\dagger Q^\prime g (Q^\prime)^{-1} = 1\}.
\end{align*}
Write $U(1,1) = U_{Q^\prime}(\RR)$ and define $G^\prime_\RR=SU(1,1)$ to be the determinant $1$ elements of $U(1,1)$. Take the corresponding complexified Lie algebra $\gC^\prime = \CC \otimes_{\RR} \mathrm{Lie}(G^\prime_\RR)$. Once again we identify $\gC^\prime$ with the trace zero elements of $M_2(\CC)$.
We then take $\tC^\prime\subset \gC^\prime$ to be the Cartan subalgebra corresponding to the trace zero diagonal elements. As before, $\tC^\prime$ is the complexification of a compact maximal torus. The compact maximal torus equals the maximal compact subgroup 
$K^\prime_\RR = \{\begin{psmatrix}
    e^{i\theta} & \\
    & e^{-i\theta}
\end{psmatrix} : \theta \in \RR\}$.

Let $X_{-\alpha^\prime}= \begin{psmatrix}
    0 & 0\\
    1 & 0
\end{psmatrix}$ and $X_{\alpha^\prime}= \begin{psmatrix}
    0 & 1\\
    0 & 0
\end{psmatrix}$. 
We then obtain a positive root system $\Phi^{\prime+}=\{\alpha^\prime\}$ for $\tC^\prime$. The unique positive root $\alpha^\prime$ is given by the negative root space $\gC^\prime_{-\alpha^\prime}=\CC.X_{-\alpha^\prime} = \begin{psmatrix}
    0 & 0\\
    * & 0
\end{psmatrix}$ and positive root space $\gC^\prime_{\alpha^\prime} = \CC.X_{\alpha^\prime}=\begin{psmatrix}
    0 & *\\
    0 & 0
\end{psmatrix}$. 
Let $\rho^\prime=(1/2)\alpha^\prime$ be the half sum of positive roots. Furthermore, we embed $G^\prime_\RR$ inside $G_\RR$ as the block diagonal
\[
    SU_2=\begin{psmatrix}
        1 &\\
        & \SL_2(\CC)\\
    \end{psmatrix}
    \cap
    SU_3~.
\]

Under the embedding above, we obtain an inclusion $\gC^\prime \subset \gC$ induced by the inclusion of the block diagonal, as follows:
\[
\begin{tikzcd}
    \gC^\prime
    \arrow[r, phantom, "\subset"]
    \arrow[d, phantom, sloped, "\subset"]
    &
    \gC
        \arrow[d, phantom, sloped, "\subset"]
    \\
    M_2(\CC)
        \arrow[r, phantom, "\subset"]
    &
    M_3(\CC)
\end{tikzcd}
\]
It follows that 
\begin{align}
    \label{eq: restriction of subspaces}
    \tC \cap \gC^\prime &= \tC^\prime, 
    \\ \notag
    \gC_\alpha \cap \gC^\prime &= \gC^\prime_{\alpha^\prime}, 
    \\ \notag
    \gC_{-\beta}\cap \gC^\prime &= 0, \text{ and }
    \\ \notag
    \gC_{-\alpha-\beta}\cap \gC^\prime &= 0.
\end{align}

We now record how $\alpha$ and $\beta$ restrict to $SU_2$ in the following proposition:
\begin{prop}
    \label{prop: restriction of weights}
    Given the roots $\alpha$ and $\beta$ as specified above, we have
    \begin{align*}
        \alpha|_{\tC^\prime} = \alpha^\prime, \text{ and }
        \beta|_{\tC^\prime}  = -\frac{1}{2}\alpha^\prime.
    \end{align*}
\end{prop}
\begin{proof}
    Note that that $\alpha,\beta$ and $\alpha^\prime$ are given by
    \begin{align*}
        \alpha \left(\begin{psmatrix}
            a&&\\
            &b&\\
            &&c
        \end{psmatrix}
        \right)
        &=b-c = \alpha^\prime(\begin{psmatrix}
            b&\\
            &c
        \end{psmatrix})
        \text{ and}
        \\
        \beta \left(\begin{psmatrix}
            a&&\\
            &b&\\
            &&c
        \end{psmatrix}
        \right)
        &=c-a \text{ for } a,b,c\in \CC
    \end{align*}
    
    Fix a basis element $H$ of $\tC^\prime$ by
    \begin{align*}
        H = \begin{psmatrix}
            1 & \\
            & -1
        \end{psmatrix}
        = \begin{psmatrix}
            0&&\\
            &1&\\
            &&-1
        \end{psmatrix}~.
    \end{align*}
    Plugging in gives $\alpha(H)=2, \beta(H)=-1$ and $\alpha^\prime(H)=2$ so we are done.
\end{proof}

\subsection{Cocycle for SU(2,1)}

Now let $\pi$ be the unique TDLDS of $G(\RR)$ and let $\pi^\prime$ be any irreducible admissible $(\gC^\prime, K^\prime)$ module of $G^\prime(\RR)$.
Suppose $J:\pi\to \pi^\prime$ is a morphism of $\gC^\prime$ modules. The inclusion $\nn^\prime \subset \nn$ and $J$ both determine a restriction map of cohomology
\begin{align}
    \label{eq: cohomology restriction SU(2,1) to SU(1,1)}
    r: H^*(\nn,\pi)_\rho \to H^*(\nn^\prime,\pi^\prime)_{\rho^\prime}.
\end{align}
With respect to these choices, we record the following fact which reduces all cohomological questions to the question of intertwining maps:
\begin{lem} \label{lem: nonzero cohomology restriction implies LDS}
    The restriction map in $\eqref{eq: cohomology restriction SU(2,1) to SU(1,1)}$ is nonzero if and only if $\pi^\prime$ is the anti-holomorphic limit of discrete series whose one-dimensional minimal $K$ type is generated by $J(X_\beta \phi_0)$.
\end{lem}
\begin{proof}
    Carayol constructs a nontrivial cocycle that generates $H^1(\nn, \pi)$ as follows. Let $\omega^{-\alpha}$, $\omega^{-\beta}$, $\omega^{-\alpha-\beta} \in \nn^\vee$ be a dual basis of $X_{-\alpha},X_{-\beta}, X_{-\alpha-\beta}$. Let $\phi_0$ be an element of the one-dimensional minimal $K$ type of $\pi$. There exist nonzero scalars $c_\alpha, c_\beta \in \CC^\times$, determined uniquely by $\phi_0$, such that
    \begin{align}
        \eta:=\phi_0 \otimes \omega^{-\alpha-\beta} + c_{\alpha}X_{\alpha} \phi_0 \otimes \omega^{-\beta} + c_{\beta}X_\beta \phi_0 \otimes \omega^{-\alpha} \in \pi\otimes \wedge^1 \nn^\vee
    \end{align}
    is a cocycle in the Chevalley Eilenberg complex $\pi\otimes \wedge^\bullet \nn^\vee$. This cocycle represents the unique nontrivial class of $H^1(\nn,\pi)$ up to scaling. Let $\psi:=c_{\beta}J(X_{\beta} \phi_0)$.
    Note that \eqref{eq: restriction of subspaces} implies that the image of $\eta$ under $r$ is equal to 
    \[
        r(\eta) = \psi \otimes \omega^{-\alpha^\prime} .
    \]
    Here we have set $\omega^{-\alpha^\prime}$ to be the element dual to $X_{-\alpha^\prime}$ in $(\nn^\prime)^\vee$ which has dimension one.
    Note that $\psi$ is a weight vector, since it is the image of a weight vector of weight $\beta$ under the $\gC^\prime$ module map $J$ and $G^\prime(\RR)$ is connected. By \ref{prop: restriction of weights}, the $\tC^\prime$ weight for $\psi$ is equal to 
    \[\beta|_{\tC^\prime} = -\rho^\prime.
    \]
    We now want to use this description of $r(\eta)$ to prove the if and only if statement of the Lemma. 
    
    We first prove the converse $(\impliedby)$. Since $SU(1,1)$ has no compact roots, we can apply \cite{williams1988}. Therefore, $H^1(\nn^\prime, \pi^\prime)_{\rho^\prime}$ has a nontrivial cocycle $\psi^\prime\otimes \omega^{-\alpha^\prime}$ given by tensoring any nonzero element $\psi^\prime$ of the unique one-dimensional minimal $K^\prime$ type of $\pi^\prime$ with $\omega^{-\alpha^\prime}$. Our assumption on $\pi^\prime$ implies the minimal $K^\prime$ type equals the weight space of $-\rho^\prime$, and therefore contains $\psi$. Hence, we can set $\psi=\psi^\prime$ by applying a non-zero scalar, and it follows that $r(\eta)=\psi^\prime\otimes \omega^{-\alpha^\prime}$ is the cocycle of a nontrivial cohomology class.
    
    It remains to prove the forward implication $(\implies)$. Suppose that $r(\eta)=\psi\otimes \omega^{-\alpha^\prime} \in \bigwedge^1 (\nn^\prime)^\vee\otimes \pi^\prime$ is a cocycle representing a non-trivial cohomology class. We must determine what is $\pi^\prime$.

    The classification of irreducible admissible $(\gC^\prime,K^\prime)$ modules provides additional information about the $\gC^\prime$ action on $\psi$. In particular, $\psi$ is a $\tC^\prime$ weight vector for $-\rho^\prime$ so it must be a minimal $K^\prime$ type. The only possible representations with such a minimal $K^\prime$ type are the principal series or the limits of discrete series. Therefore, there exists $s\in \CC$ such that 
    \begin{align}
        (s-\frac{1}{2})^2 \psi = X_{-\alpha^\prime} X_{\alpha^\prime}\psi.
    \end{align}

    However, it must be the case that $X_{-\alpha^\prime}X_{\alpha^\prime}\psi=0$. Indeed, if we suppose otherwise then $r(\eta) \in \bigwedge^1 \nn^\prime \otimes \pi^\prime$ would be the coboundary of $(s-\frac{1}{2})^{-2}X_{\alpha^\prime}\psi \in \bigwedge^0 \nn^\prime \otimes \pi^\prime$. This contradicts the assumption that $r(\eta)$ is a non-trivial cocycle.
    
    We therefore see that $s=1/2$. The parameter $s$ determines the eigenvalue of the Casimir operator $(1/4)(H^2+X_{-\alpha}X_\alpha+X_\alpha X_{-\alpha})$ on $\pi^\prime$ as $s(1-s)=1/4$.
    The only such irreducible admissible representation with Casimir eigenvalue $1/4$ and non-zero vector of weight $-\rho^\prime$ is the anti-holomorphic limit of discrete series. This completes the proof of the lemma.
\end{proof}

\subsection{The Harish-Chandra module for SU(2,1)}

We shall now write the $K$ type decomposition of the TDLDS $\pi$, and recall the action of $\gC$. We follow the notation established in \cite{carayol1998} based on \cite{jw1977}. There, it is established that $\pi$ admits a decomposition into $K$ types
\begin{align}
    \pi = \bigoplus_{p,q\geq 0}\mathcal{H}^{p,q}
\end{align}
where each $\mathcal{H}^{p,q}$ is an irreducible representation of $K$. Every such $\mathcal{H}^{p,q}$ is given a basis $\{e_{p,q}(j)\}_{p\geq j \geq -q}$. On this basis, we recall the action of $\gC$ in the following theorem:
\begin{thm}[\cite{carayol1998},\cite{jw1977}]
    \label{thm: action of root vectors}
    The action of the negative root vectors is given by
    \begin{align*}
        X_{-\alpha-\beta}.e_{p,q}(j) &= -(p-j)e_{p,q}(j+1)
        \\
        X_{-\beta}.e_{p,q}(j) &= \frac{1}{p+q+1}((q+1)^2 e_{p,q+1}(j)-p(p-j) e_{p-1,q}(j)))
        \\
        X_{-\alpha}. e_{p,q}(j) &= \frac{1}{p+q+1} ((p+1)^2 e_{p+1,q}(j+1) + q(p-j) e_{p,q-1}(j+1)).
    \end{align*}
    Furthermore, the action of the positive root vectors is given as
    \begin{align*}
        X_{\alpha+\beta}.e_{p,q}(j)
        &=
        -(q+j).e_{p,q}(j-1)
        \\
        X_{\beta}.e_{p,q}(j)
        &=
        \frac{1}{p+q+1}((p+1)^2 e_{p+1,q}(j)-q(q+j)e_{p,q-1}(j))
        \\
        X_{\alpha}.e_{p,q}(j)
        &=-\frac{1}{p+q+1}((q+1)^2 e_{p,q+1}(j-1)+ p(q+j) e_{p-1,q}(j-1))
    \end{align*}
\end{thm}
\begin{proof}
    The first 5 identities are quoted verbatim from \cite{carayol1998} and the last is obtained by expanding from $X_\alpha = [X_{\alpha+\beta}, X_{-\beta}]$
\end{proof}

For each weight $\mu \in \Lambda$ in the integral weight lattice $\Lambda \subset \tC^\vee$, let $\pi_\mu$ denote the $\mu$ weight space of $\pi$. 
Note in this case that $e_{0,0}(0)$ is the minimal $K$ type vector since it is annihilated by both $X_{\alpha+\beta}$ and $X_{-\alpha-\beta}$. From there, we record the following:
\begin{prop}
    \label{prop: weight basis by K types}
    Each basis vector in $\{e_{p,q}(j)\}_{p\geq j \geq -q}^{p,q \geq 0}$ of $\pi$ satisfies
    \begin{align}
        \label{eq: K types to weight vectors}
        e_{p,q}(j)\in \pi_\mu \text{ where }
        \mu = -j \alpha + (p-q-j) \beta.
    \end{align}
    Therefore, if $\mu = u\alpha + v\beta$ then
    \begin{align}
        \label{eq: weight vectors to K types}
        \pi_\mu = 
            \bigoplus_{k\geq 0} \CC . e_{(v^+ -u^-) +k,(u^+- v^-) +k}(-u)
    \end{align}
    where for any integer $x$ we define
    \begin{align*}
        x^+ 
        =
        \begin{cases}
            x \text{ if } x\geq 0
            \\
            0 \text{ if } x\leq 0
        \end{cases}
        \text{ and } x^- 
        &=
        \begin{cases}
            x \text{ if } x \leq 0
            \\
            0 \text{ if } x \geq 0
        \end{cases}
    \end{align*}
\end{prop}
\begin{proof}
    We will first argue that \eqref{eq: weight vectors to K types} follows from \eqref{eq: K types to weight vectors}. By assumption, $e_{p,q}(j)\in \pi_\mu$ if and only if 
    \begin{align}
        \label{eq: pqj from mu}
        p\geq j \geq -q \text{ and } -j\alpha+(p-q-j)\beta = \mu
    \end{align}
    both hold. If $\mu=u\alpha+v\beta$ is fixed, then $j=-u$ is determined by $\mu$ and $q=p-(v-u)$ is determined by $p$. Furthermore, we assumed $p,q\geq 0$, so we have
    \begin{align*}
        p \text{ is greater than } 0, v, -u, v-u
    \end{align*}
    Let $p^\prime$ be the smallest integer satisfying this condition. We get $p=p^\prime+k$ for $k\geq 0$, and $p^\prime = \max\{0,v,-u,v-u\}=v^+-u^-$. Similarly we can show that the smallest $q^\prime$ that can be plugged into $q$ is $q^\prime=u^+-v^-$ and we readily check that $q^\prime = p^\prime -(v-u)$. We obtained 
    \[
        e_{p,q}(j) \in \mu \iff p=v^+-u^- + k \text{ and } q= u^+-v^-+k \text{ for some } k\in \ZZ_{\geq 0},
    \]
    and this shows \eqref{eq: weight vectors to K types} follows from \eqref{eq: K types to weight vectors}.

    It remains to show \eqref{eq: K types to weight vectors}. We need to show $e_{p,q}(j)$ is a weight vector for $-j\alpha +(p-q-j)\beta$. 
    Notice that
    \begin{align*}
        X_{\beta}^p. e_{0,0}(0) = \frac{(p!)^2}{p!} e_{p,0}(0)
    \end{align*}
    due to a cancellation from plugging in $q=0$ in Theorem \ref{thm: action of root vectors}. Similarly, we get
    \begin{align}
        \label{eq: highest weight vector}
        X_{\alpha}^q X_\beta^p e_{0,0}(0)
        =
        \frac{(-1)^q(p!q!)^2}{(p+q)!} e_{p,q}(-q)
    \end{align}
    due to a cancellation from plugging in $q+j=0$ in Theorem \ref{thm: action of root vectors}. Finally, if $p\geq j \geq -q$ then $j^\prime:=j+q$ gives us $p+q\geq j^\prime \geq 0$. We then calculate
    \begin{align*}
        X_{-\alpha-\beta}^{j^\prime}X_{\alpha}^q X_\beta^p e_{0,0}(0)
        =\lambda_0
        e_{p,q}(-q+j^\prime)
        \\
        \text{ where }
        \lambda_0=\frac{(-1)^{j^\prime+q}(p!q!)^2}{(p+q-j^\prime)!}.
    \end{align*}
    Since we took $p,q\geq 0$ and $p\geq j \geq -q$, it follows that $\lambda_0\neq 0$. Therefore, $e_{p,q}(j)$ is in the $\mu= q\alpha+p\beta-j^\prime(\alpha+\beta) = -j \alpha + (p-q-j)\beta$ weight space $\pi_\mu$ and we are done.

\end{proof}

We will now rewrite a basis for $\pi_{\alpha-k\beta}$ for any integer $k\in \ZZ$. This basis will simplify the construction of the intertwining map. It is recorded in the following lemma as a consequence of the formulas in Theorem \ref{thm: action of root vectors}.

\begin{lem}
    \label{lem: GGP root string}
    Suppose $\phi$ is a $\Phi_K^+$ highest $K$ weight vector generating a $K$ type in $\pi$ and furthermore assume $\phi$ has weight $\mu$. Then, 
    \begin{enumerate}
        \item \label{sublem: GGP root string easier basis}
            the corresponding weight space $\pi_{\mu}$ admits a basis given by
            \[
                \{(X_{-\alpha} X_{\alpha})^k . \phi \}_{k\geq 0}
            \]
        \item \label{sublem: GGP root string PBW basis}
            By Poincare-Birkhoff-Witt, we also have a basis of $\pi_\mu$ given by
            \[
                \{X_{-\alpha}^{k} X_{\alpha}^{k} . \phi\}_{k\geq 0}.
            \]
            Furthermore, the complex spans of the subset of these bases at index $k\geq 1$ are identical:
            \begin{align}\label{eq: span from 1}
                \mathrm{Span} \{(X_{-\alpha} X_\alpha)^k\}_{k\geq 1}
                =\mathrm{Span} \{X_{-\alpha}^k X_{\alpha}^k\}_{k\geq 1}.
            \end{align}
        \item \label{sublem: GGP root string isomorphism}
            Now assume $\mu=p\beta$ is a non-negative integer multiple of $\beta$. Then, for all non-negative integers $r\in \ZZ_{\geq 0}$, the root vector $X_{-\alpha}$ acts by isomorphisms on the ``$-\alpha$ root string"
            \[
                X_{-\alpha}: \pi_{\mu-r\alpha} 
                \overset{\sim}{\longrightarrow}
                \pi_{\mu-(r+1)\alpha}
            \]
    \end{enumerate}

\end{lem}
\begin{proof}

    We begin with 
    \ref{lem: GGP root string}
    \eqref{sublem: GGP root string easier basis}.
    
    If we suppose $e_{p,q}(j_0)$ is a highest weight vector, then it must be annihilated by $X_{\alpha+\beta}$. Applying Theorem \ref{thm: action of root vectors}, we get $0=X_{\alpha+\beta}. e_{p,q}(j_0) = (q-j_0).e_{p,q}(j_0-1)$ so $j_0=-q$. Hence, any highest weight vector $\phi$ is a scalar multiple of $e_{p,q}(-q)$ for non-negative integers $p,q\geq 0$ that we now fix. Let us write $e_k:=e_{p+k,q+k}(-q)$ and $V_k = \CC e_k$. By Proposition \ref{prop: weight basis by K types}, $\phi$ is a weight vector for $\mu = q\alpha + p\beta $, and 
    \[
        \pi_{\mu} = \bigoplus_{k\geq 0} V_k
    \]
    We want to show $\pi_\mu$ has basis $\{(X_{-\alpha} X_{\alpha})^k. e_0\}_{k\geq 0}$.
    We claim by induction on $N\geq 0$ that
    \begin{align}
        \label{eq: induction for basis}
        \bigoplus_{0\leq k \leq N} V_k = \mathrm{Span}_\CC \{(X_{-\alpha} X_{\alpha})^k . e_0\}_{0\leq k \leq N}
    \end{align}
    When $N=0$ this is clearly true, so assume \eqref{eq: induction for basis} holds for $N$. By induction hypothesis, $e_N:=e_{p+N,q+N}(-q) \in V_N$ can be written as an element of the right hand side of \eqref{eq: induction for basis}. It then follows that $X_{-\alpha}X_\alpha e_N$ is spanned by $\{ (X_{-\alpha} X_\alpha)^k. e_0\}_{1\leq k\leq N+1}$.
    
    We now claim there exists some $\lambda_0\neq 0$ such that
    \begin{align*}
        X_{-\alpha} X_{\alpha} e_N = \lambda_0 e_{N+1}+w,
    \end{align*}
    where $w \in \bigoplus_{0\leq k \leq N} V_k$ is a linear combination of $e_1,...,e_N$.
    
    Note that by \ref{thm: action of root vectors} applied to $X_{-\alpha}X_{\alpha}$, we obtain scalars $\lambda_0,\lambda_1, \lambda_2 \in \CC$ such that
    \begin{align*}
        X_{-\alpha} X_{\alpha}. e_N
        &=
        \lambda_0 e_{N+1} + \lambda_1 e_N + \lambda_2 e_{N-1}.
    \end{align*}
    The expression above will be well defined due to cancellations in $\lambda_1$ and $\lambda_2$ that will result when $N=0$. Furthermore, the calculation from 
    \ref{thm: action of root vectors} will result in the following expression for $\lambda_0$:
    \[
        \lambda_0 = \frac{(p+N+1)^2(q+N+1)^2}{(p+q+2N+1)(p+q+2N+2)} \neq 0.
    \]
    Hence, $\lambda_0$ will be a nonzero scalar since $p,q,N\geq 0$. This completes the claim.

    Let $v^\prime:= X_{-\alpha} X_\alpha.e_N$ and $V^\prime_{N^\prime}= \mathrm{Span}_\CC \{(X_{-\alpha}X_\alpha)^k. e_0\}_{0\leq k \leq N^\prime}$ for any $N^\prime \geq 0$. We have just shown that $v^\prime \not\in V^\prime_N$ and 
    \begin{align}
        \label{eq: easy basis induction step vector subspace inclusions}
        V^\prime_{N+1} \supset \CC.v^\prime + V^\prime_N \subset \bigoplus_{0\leq k \leq N+1} V_k
    \end{align}
    Hence, $V^\prime_{N+1}$ and $\bigoplus_{0\leq k \leq N+1} V_k$ are vector spaces of dimension at most $N+1$, which contain the same $N+1$ dimensional subspace $\CC.v^\prime + V^\prime_N$, so all of the inclusions in \eqref{eq: easy basis induction step vector subspace inclusions} are equalities.
    This finishes the induction and we have shown that \eqref{eq: induction for basis} holds for all $N\geq 0$. 
    Therefore, $\{(X_{-\alpha}X_\alpha)^k.e_0\}_{k \geq 0}$ is linearly independent and spans all of $\pi_\mu$, 
    so we are done with
    \ref{lem: GGP root string}
    \eqref{sublem: GGP root string easier basis}.

    We now prove 
    \ref{lem: GGP root string}
    \eqref{sublem: GGP root string PBW basis}, by rehashing the Poincare-Birkhoff-Witt theorem. Write $H_{\alpha}=[X_\alpha,X_{-\alpha}]$.
    By applying 
    \[
        X_\alpha X_{-\alpha} = [X_{\alpha}, X_{-\alpha}]+X_{-\alpha} X_\alpha
        = H_\alpha + X_{-\alpha} X_\alpha
    \]
    we obtain
    \begin{align*}
        &X_{-\alpha} X_{\alpha} X_{-\alpha}^N X_{\alpha}^N. e_0
        =
        X_{-\alpha}^{N+1} X_{\alpha}^{N+1}. e_0 +
        \sum_{t=1}^N X_{-\alpha}^t H_\alpha X_{-\alpha}^{N-t} X_{\alpha}^N. e_0.
    \end{align*}
    Here we have formally defined $\sum_{t=1}^N$ to be zero if $N=0$.
    Note that $e_0$ is a weight vector for $\mu$, and $X_{\pm\alpha}$ sends weight vectors for $\mu^\prime$ to weight vectors for $\mu^\prime \pm \alpha$. Letting $c$ be the scalar $c=\sum_{t=1}^N(\mu+t\alpha)(H_\alpha))$, we obtain
    \begin{align}
        \label{eq: PBW rearrangement}
        X_{-\alpha} X_{\alpha} X_{-\alpha}^N X_{\alpha}^N e_0
        =&
        X_{-\alpha}^{N+1} X_{\alpha}^{N+1} e_0 +
        c X_{-\alpha}^{N} X_{\alpha}^N e_0.
    \end{align}
    
    Using the above identity, we can argue by induction on $N$ that $\{X_\alpha^k X_{-\alpha}^k e_0\}_{k=0}^N$ is linearly independent and generates the same vector space as $\{(X_\alpha X_{-\alpha})^k e_0\}_{k=0}^N$ for all $N$. This argument, which we omit, implies that $\{X_{\alpha}^k X_{\alpha}^k. e_0\}_{k\geq 0}$ is a basis of $\pi_\mu$. This proves the first half of 
    \ref{lem: GGP root string}
    \eqref{sublem: GGP root string PBW basis}
    and it remains to prove \eqref{eq: span from 1}.
    
    We claim that each $X_{-\alpha}^N X_\alpha^N. e_0$ is a linear combination of $X_{-\alpha}X_\alpha e_0,..., (X_{-\alpha}X_\alpha)^N e_0$ for all $N\geq 1$. Indeed, this is clearly true for $N=1$. Assume by induction hypothesis that
    \[
        X_{-\alpha}^N X_\alpha^N. e_0 = \sum_{i=1}^N \lambda_i (X_{-\alpha}X_\alpha)^k.e_0.
    \]
    Applying this to \eqref{eq: PBW rearrangement}, we obtain
    \begin{align*}
        X_{-\alpha}^{N+1} X_\alpha^{N+1}. e_0
        &=(X_{-\alpha}X_\alpha-c) X_{-\alpha}^N X_\alpha^N. e_0
        \\
        &=\sum_{i=1}^N \lambda_i (X_{-\alpha}X_\alpha)^{k+1}.e_0
        +c\sum_{i=1}^N \lambda_i (X_{-\alpha}X_\alpha)^k.e_0.
    \end{align*}
    This expression is a linear combination of the $(X_{-\alpha} X_\alpha)^k.e_0$ for $k\geq 1$, which completes the induction. This finishes \ref{lem: GGP root string}
    \eqref{sublem: GGP root string PBW basis}.

    Finally, we need to show
    \ref{lem: GGP root string}
    \eqref{sublem: GGP root string isomorphism}.
    Write once again $\mu = q\alpha+p\beta$, which we note is a sum of simple roots. We have assumed $\mu$ is a non-negative integer multiple of $\beta$ and therefore $q=0$. If $r$ is a non-negative integer, then define
    \begin{align*}
        e^r_k = e_{p+r+k,k}(r)
        \text{ and }
        V^r_k = \CC e^r_k.
    \end{align*}
    By Proposition \ref{prop: weight basis by K types} we can decompose the $\mu-r\alpha$ weight space as
    \[
        \pi_{\mu-r\alpha} = \bigoplus_{k\geq 0} V^r_k
    \]

    Let us define $A_k^i \in \CC$ for non-negative integers $i,k\geq 0$ by
    \[
        A_k^i = \begin{cases}
            (p+r+k+1)^2/(p+r+2k+1) & \text{ if } i=k
            \\
            k(p+k)/(p+r+2k+1) & \text{ if } i =k-1
            \\
            0 & \text{ otherwise.}
        \end{cases}
    \]
    These are precisely the coefficients that occur in Theorem \ref{thm: action of root vectors}, such that
    \begin{align*}
        X_{-\alpha} e^r_k
        &=X_{-\alpha} e_{p+r+k,k}(r)
        \\
        &=\frac{(p+r+k+1)^2}{p+r+2k+1} e_{p+r+1+k,k}(r+1) 
        + \frac{k(p+k)}{p+r+2k+1} e_{p+r+k,k-1}(r+1)
        \\
        &=: A_k^k e_k^{r+1} +A_k^{k-1} e_{k-1}^{r+1}
    \end{align*}
    This expression is well defined, ie. there is no need to worry about $e_{-1}^{r+1}$, because $A_k^{k-1}=0$ when $k=0$. Notice that $A_k^k$ is always nonzero because $p,q,k\geq 0$. Therefore, the matrix $A:=\{A^i_{k}\}_{i,k\geq 0}$ is invertible, since it is upper triangular with non-zero diagonal entries, and entries equal to zero above the super diagonal. In particular, each row and column of $A$ will have only finitely many non-zero entries. The inverse can therefore be produced inductively using Gaussian elimination, and will not involve infinite column vectors. The matrix $A$ is precisely the one representing the linear transformation
    \[
        X_{-\alpha}: \pi_{\mu-r\alpha} \longrightarrow\pi_{\mu-(r+1)\alpha}
    \]
    with respect to the bases $\{e_k^r\}\subset \pi_{\mu-r\alpha}$ and $\{e_k^{r+1}\} \subset \pi_{\mu-(r+1)\alpha}$. Therefore we have shown that $X_{-\alpha}: \pi_{\mu-r\alpha} \to \pi_{\mu-(r+1)\alpha}$ is an isomorphism of vector spaces, finishing the proof.
    
\end{proof}

\subsection{The Harish-Chandra module for SU(1,1)}
We recall the $(\gC^\prime,K^\prime)$ module of the anti-holomorphic limit of discrete series for $G^\prime = SU(1,1)$. We shall write
\[
    \pi^\prime = \bigoplus_{r\geq 0} \pi^\prime_r 
    \text{ where }
    \pi^\prime_r=\pi^\prime_{-(r+\frac{1}{2})\alpha^\prime}
\]
Here, each $\pi^\prime_r$ is the one-dimensional weight space for $-(r+1/2)\alpha^\prime$, and $\pi^\prime_0$ is the minimal $K^\prime$ type. Let $\psi_0\in \pi^\prime_0$ denote any choice of non-zero minimal $K^\prime$ type vector.

Recall from our construction of the root spaces that $X_{\pm \alpha} \in \gC$ and $X_{\pm \alpha^\prime}\in \gC^\prime$ coincide under the inclusion induced by $G^\prime \hookrightarrow G$. We will therefore write
\begin{align*}
    X &= X_{-\alpha} = X_{-\alpha^\prime},
    \\
    Y &= X_{\alpha} = X_{\alpha^\prime}, \text{ and }
    \\
    H&=[X,Y]
\end{align*}
This is consistent with the notation that was previously used for $H=H_\alpha$.
We then recall the standard fact that $X^r \psi \in \pi^\prime_r$ will be a non-zero element that spans $\pi^\prime_r$.

\subsection{Intertwining map}
We want to construct a morphism $J: \pi_3 \to \pi_2$ that is compatible with the $\gC^\prime$ action. 
Setting $\phi_0 = e_{0,0}(0)$, recall that $\phi_0$ spans the minimal $K$ type of $\pi_3$. Let $\phi_\beta=X_\beta. \phi_0$.
Recall from Lemma 
\ref{lem: GGP root string} 
parts \eqref{sublem: GGP root string easier basis} 
and   \eqref{sublem: GGP root string isomorphism}
that $\{(XY)^k\phi_\beta\}_{k\geq 0}$ is a basis for $\pi_\beta$, and that $X^r: \pi_\beta \to \pi_{\beta-r\alpha}$ is an isomorphism for all non-negative integers $r\geq 0$.

We now want to construct a linear transformation $J:\pi\to \pi^\prime$. We will proceed by defining $J$ on each weight space $\pi_\mu$ for every $\mu$. The compatibility with the $\gC^\prime$ action will then be checked afterwards.

We start by declaring that if a weight $\mu$ cannot be written in the form of $\beta - r\alpha$ for any $r\geq 0$, then
\begin{align*}
    J(\pi_\mu) = 0 \text{ for } \mu \not\in \beta-\ZZ_{\geq 0}\alpha
\end{align*}
We then need to define $J$ on the relevant weights $\beta-r\alpha$ for all $r\geq 0$. If $r=0$, then we do so by defining $J$ on each basis element $(XY)^k.\phi_\beta\in \pi_\beta$ by
\begin{align*}
    J((XY)^k.\phi_\beta) = \begin{cases}
        \psi &\text{if } k=0
        \\
        0 &\text{otherwise}
    \end{cases} 
\end{align*}
It remains to define $J$ when $r\geq 1$, which we do so inductively. Any element of $\pi_{\beta-r\alpha}$ can be written as $X\phi$ for $\phi\in \pi_{\beta-(r-1)\alpha}$. Having defined $J(\phi)$ by way of induction, we can then define
\begin{align}
    \label{eq: inductive definition of J}
    J(X.\phi) = X.J(\phi).
\end{align}
We have thus defined $J$ as a linear map on $\pi = \bigoplus_{\mu} \pi_\mu$. We must now argue that $J$ commutes with the action of $\gC^\prime$. This is recorded in the following proposition:
\begin{prop} \label{prop: g module map}
    Let $J$ be as just defined, and let $\phi \in \pi$ be arbitrary. Then, for any $Z \in \gC^\prime$ we have $J(Z.\phi) = Z.J(\phi)$.
\end{prop}
\begin{proof}
    We can assume without loss of generality that $\phi \in \pi_\mu$ is a weight vector, since the weight vectors generate $\pi$ as a vector space.
    We need to prove the following three statements:
    \begin{align}
        \label{eq: J commutes with X}
        J(X.\phi)&=X.J(\phi)
        \\
        \label{eq: J commutes with H}
        J(H.\phi)&=H.J(\phi)
        \\
        \label{eq: J commuteswith Y}
        J(Y.\phi)&=Y.J(\phi).
    \end{align}
    Note that if $\mu \not\in \beta-\ZZ\alpha$ then all three identities are automatically true since $J(\pi_{\mu}), J(\pi_{\mu\pm\alpha})$ would all vanish and $\gC^\prime \pi_\mu \subset \pi_{\mu}\oplus \pi_{\mu-\alpha}\oplus\pi_{\mu+\alpha}$. We can therefore assume $\mu = \beta-r\alpha$ for some integer $r$.

    Now let us start with \eqref{eq: J commutes with X}. We must verify that $J(X\phi)=XJ(\phi)$. Note that if $r<-1$ then $J(X \pi_\mu)=0 = X.J(\pi_\mu)$. The case for $r\geq 0$ follows from the inductive definition of $J$ given in \eqref{eq: inductive definition of J}. Hence, the remaining case for \eqref{eq: J commutes with X} is when $r=-1$, ie. $\mu=\beta+\alpha$. In this case, recall from formula
    \eqref{eq: highest weight vector} that
    \begin{align}
        \phi_\beta &= X_{\beta} e_{0,0}(0) = \lambda_0 e_{1,0}(0) \text{ and }
        \\
        Y\phi_\beta &=Y X_{\beta} e_{0,0}(0) = \lambda_1 e_{1,1}(-1),
    \end{align}
    for nonzero scalars $\lambda_0, \lambda_1\in \CC^\times$. 
    Note that $e_{1,0}(0)$ and $e_{1,1}(-1)$ are both highest weight vectors annihilated by $X_{\alpha+\beta}$ as shown in Theorem \ref{thm: action of root vectors}. Therefore, the same must hold for $\phi_\beta$ and $Y\phi_{\beta}$. By Lemma 
    \ref{lem: GGP root string}
    \eqref{sublem: GGP root string PBW basis}, 
    we get that
    \begin{align}
        \{X^{k} Y^{k+1} \phi_{\beta}\}_{k\geq 0} \subset \pi_{\alpha+\beta}
    \end{align}
    is a basis of the $\alpha+\beta$ weight space $\pi_{\alpha+\beta}$. We now take this basis, and apply $X$ to all of its elements to obtain
    \begin{align}
        \{X^{k+1} Y^{k+1} \phi_\beta\}_{k\geq 0}\subset \pi_{\beta}.
    \end{align}
    Now note that $\{X^{k+1} Y^{k+1} \phi_\beta\}_{k\geq 0}$ generates the same vector subspace as $\{(X_{-\alpha} X_{\alpha})^{k+1} \phi_\beta\}_{k\geq 0}$ due to Lemma
    \ref{lem: GGP root string}
    \eqref{sublem: GGP root string PBW basis}
    \eqref{eq: span from 1}. This means that $J$ vanishes on $\{X^{k+1} Y^{k+1} \phi_\beta\}_{k\geq 0}$, since it was defined to vanish on $\{(X_{-\alpha} X_{\alpha})^{k+1} \phi_\beta\}_{k\geq 0}$. Putting everything together, we have verified that
    \begin{align}
        J(X (X^{k} Y^{k+1} .\phi_\beta)) = 0 = X. J(X^k Y^{k+1}.\phi_\beta)
    \end{align}
    on every basis element $X^k Y^{k+1}. \phi_\beta \in \pi_{\beta+\alpha}$, which means we have verified \eqref{eq: J commutes with X} in the final case of $r=-1$. This completes the proof of \eqref{eq: J commutes with X}.
    
    We then need to prove \eqref{eq: J commutes with H}. This follows from Proposition \ref{prop: restriction of weights}, where we have
    \begin{align*}
        (\beta-r\alpha)|_{\tC^\prime} = -(r+\frac{1}{2})\alpha^\prime \text{ for all } r\geq 0.
    \end{align*}
    This implies that $J(\pi_\mu) \in \pi^\prime_{\mu|_{\tC^\prime}}$ whenever $J(\pi_\mu)$ is nonzero. Since $J$ is linear and $\tC^\prime$ acts on each weight vector by scalars, \eqref{eq: J commutes with H} follows.

    It remains to show \eqref{eq: J commuteswith Y}. We do this by taking $\phi \in \pi_{\beta-r\alpha}$ and proving \eqref{eq: J commuteswith Y} by induction on $r\geq 0$. For the base case, note that if $\phi \in \pi_{\beta-r\alpha}$ for $r=0$ then $Y\phi \in \pi_{\beta+\alpha}$, on which $J$ was defined to be zero. Now suppose by way of induction that \eqref{eq: J commuteswith Y} holds for $r$, and let $\phi \in \pi_{\beta-(r+1)\alpha}$. 
    By Lemma
    \ref{lem: GGP root string}
    \eqref{sublem: GGP root string isomorphism}, 
    there exists some $\xi \in \pi_{\beta-r\alpha}$ such that $X \xi = \phi$. By \eqref{eq: J commutes with X} and \eqref{eq: J commutes with H}, and the definition $H=[X,Y]$, we have
    \[
        J(Y.\phi)
        =J(YX.\xi)=J(-H.\xi)+J(XY. \xi)=-H.J(\xi)+ X.J(Y.\xi)
    \]
    Since $\xi \in \pi_{\beta-r\alpha}$ we can apply the induction hypothesis and complete this expression into
    \begin{align*}
        J(Y.\phi)
        &=-H.J(\xi)+ X.J(Y.\xi)
        \\
        &=-H.J(\xi)+ XY.J(\xi)
        \\
        &=(-H+XY).J(\xi) = YX.J(\xi) = Y.J(X\xi) = Y.J(\phi).
    \end{align*}
    This completes the proof of the proposition.
\end{proof}
By the above proposition, we have that $J$ is a morphism of $\gC^\prime$ modules. Combining Lemma \ref{lem: nonzero cohomology restriction implies LDS} and Proposition \ref{prop: g module map}, we have thus shown:
\begin{thm}\label{Main Thm: Low rank GGP}
    Let $\pi$ be the Harish-Chandra module of the unique TDLDS of $SU(2,1)$. Let $\pi^\prime$ be any admissible irreducible Harish-Chandra module for $SU(1,1)$.
    Let $\gC^\prime$ be the complexification of $\mathrm{Lie}(SU(1,1))$. Let $\nn$ be the negative root space for $SU(2,1)$. Let $\nn^\prime=\nn\cap \gC^\prime$, also considered a negative root space for $SU(1,1)$.
    
    We can choose $\nn$ and the embedding $SU(1,1)\subset SU(2,1)$ such that the following are equivalent: 
    \begin{itemize}
        \item there exists a $\gC^\prime$ module homomorphism $J:\pi\to \pi^\prime$ such that the induced map on cohomology
        \begin{align*}
            H^*(\nn,\pi)\to H^*(\nn^\prime,\pi^\prime)
        \end{align*}
        is non-zero.
        \item 
        $\pi^\prime$ is the the anti-holomorphic limit of discrete series for $\nn^\prime$, ie. the limit of discrete series with negative weights.
    \end{itemize}
\end{thm}



\bibliography{refs}{}

@inbook{bgg1975,
title = "Differential operators on the base affine space and study of g-modules",
keywords = "Lie groups, חבורות לי",
author = "Joseph Bernstein and Gelfand, {I. M.} and Gelfand, {S. I.}",
note = "Includes bibliographies Summer School on Group Representations : Bolyai Janos Mathematical Society (1971 Budapest) ",
year = "1975",
language = "אנגלית",
isbn = "0 82754 296 7",
pages = "21--64",
editor = "Gelʹfand, {I. M. (Izrailʹ Moiseevich)}",
booktitle = "Lie groups and their representations",
publisher = "London : Hilger",
}

@article{carayol1998, 
    title={Limites dégénérées de séries discrètes, formes automorphes et variétés de Griffiths–Schmid: le cas du groupe U (2, 1)}, 
    volume={111}, 
    DOI={10.1023/A:1000282229017}, 
    number={1}, 
    journal={Compositio Mathematica}, 
    publisher={London Mathematical Society}, 
    author={Carayol, Henri}, 
    year={1998}, 
    pages={51–88}
}

@article{co1975,
    author = {Casselman, William and Osborne, M. Scott},
    title = {The $n$-cohomology of representations with an infinitesimal character},
    journal = {Compositio Mathematica},
    pages = {219--227},
    publisher = {Noordhoff International Publishing},
    volume = {31},
    number = {2},
    year = {1975},
    zbl = {0343.17006},
    mrnumber = {396704},
    language = {en},
    url = {http://www.numdam.org/item/CM_1975__31_2_219_0/}
}

@book{fhw2006,
  author    = {Fels, Gregor and Huckleberry, Alan T. and Wolf, Joseph A.},
  title     = {Cycle Spaces of Flag Domains: A Complex Geometric Viewpoint},
  series    = {Progress in Mathematics},
  volume    = {245},
  publisher = {Birkhäuser},
  year      = {2006},
  isbn      = {978-0-8176-4391-1},
  doi       = {10.1007/0-8176-4531-6},
  url       = {https://doi.org/10.1007/0-8176-4531-6}
}

@book{ggk2013,
    author = {Green, Mark. and Griffiths, Phillip. and Kerr, Matt.},
    title = {Hodge Theory, Complex Geometry and Representation Theory},
    isbn = { 9871470410124 },
    publisher = { American Mathematical Society Providence, RI },
    pages = { 197 p. ; },
    year = { 2013 },
    type = { Book },
    language = { English },
}

@misc{grt2013,
      title={Quotients of non-classical flag domains are not algebraic}, 
      author={Phillip Griffiths and Colleen Robles and Domingo Toledo},
      year={2013},
      eprint={1303.0252},
      archivePrefix={arXiv},
      primaryClass={math.AG}
}

@misc{harris2012,
      title={The Refined Gross-Prasad Conjecture for Unitary Groups}, 
      author={R. Neal Harris},
      year={2012},
      eprint={1201.0518},
      archivePrefix={arXiv},
      primaryClass={math.NT},
      url={https://arxiv.org/abs/1201.0518}, 
}

@misc{hks2025,
      title={Translation functors, branching problems, and applications to the restriction of coherent cohomology of Shimura varieties}, 
      author={Michael Harris and Toshiyuki Kobayashi and Birgit Speh},
      year={2025},
      eprint={2509.17007},
      archivePrefix={arXiv},
      primaryClass={math.RT},
      url={https://arxiv.org/abs/2509.17007}, 
}

@inbook{humphreys1990,
    place={Cambridge}, 
    series={Cambridge Studies in Advanced Mathematics}, 
    title={Reflection Groups and Coxeter Groups}, 
    publisher={Cambridge University Press}, 
    author={Humphreys, James E.}, 
    year={1990}, 
    collection={Cambridge Studies in Advanced Mathematics}}

@article{jw1977,
 ISSN = {00029947},
 URL = {http://www.jstor.org/stable/1998503},
 abstract = {Let $G$ be a connected semisimple Lie group with finite center and let $K$ be a maximal compact subgroup. Let $\pi$ be a not necessarily unitary principal series representation of $G$ on the Hilbert space $H^\pi$. If $X^\pi$ denotes the space of $K$-finite vectors of $H^\pi, \pi$ induces a representation $\pi_0$ of $U(g)$, the enveloping algebra of the Lie algebra of $G$, on $X^\pi$. In this paper, we determine when $\pi_0$ is irreducible, and if $\pi_0$ is not irreducible we determine the composition series of $X^\pi$ and the structure of the induced representations on the subquotients. Explicit computation of the intertwining operators for the different principal series representations are obtained and their relationship to polynomials defined by B. Kostant are obtained.},
 author = {Kenneth D. Johnson and Nolan R. Wallach},
 journal = {Transactions of the American Mathematical Society},
 pages = {137--173},
 publisher = {American Mathematical Society},
 title = {Composition Series and Intertwining Operators for the Spherical Principal Series. I},
 urldate = {2026-04-15},
 volume = {229},
 year = {1977}
}

@article{soergel1997,
  title={On the n-cohomology of limits of discrete series representations},
  author={Wolfgang Soergel},
  journal={Representation Theory of The American Mathematical Society},
  year={1997},
  volume={1},
  pages={69-82}
}

@article {Carayol_Knapp,
    AUTHOR = {Carayol, Henri and Knapp, A. W.},
     TITLE = {Limits of discrete series with infinitesimal character zero},
   JOURNAL = {Trans. Amer. Math. Soc.},
  FJOURNAL = {Transactions of the American Mathematical Society},
    VOLUME = {359},
      YEAR = {2007},
    NUMBER = {11},
     PAGES = {5611--5651},
      ISSN = {0002-9947},
   MRCLASS = {22E45 (14L35 20G20)},
  MRNUMBER = {2327045},
MRREVIEWER = {David A. Renard},
       DOI = {10.1090/S0002-9947-07-04306-1},
       URL = {https://doi.org/10.1090/S0002-9947-07-04306-1},
}

@misc{mcgovern2012,
      title={Closures of K-orbits in the flag variety for U(p,q)}, 
      author={William M. McGovern},
      year={2012},
      eprint={0905.0127},
      archivePrefix={arXiv},
      primaryClass={math.RT}
}

@article{williams1988,
    title = {The n-cohomology of limits of discrete series},
    journal = {Journal of Functional Analysis},
    volume = {80},
    number = {2},
    pages = {451-461},
    year = {1988},
    issn = {0022-1236},
    doi = {https://doi.org/10.1016/0022-1236(88)90011-0},
    url = {https://www.sciencedirect.com/science/article/pii/0022123688900110},
    author = {Floyd L Williams},
}

@incollection {GGP,
    AUTHOR = {Gan, Wee Teck and Gross, Benedict H. and Prasad, Dipendra},
     TITLE = {Symplectic local root numbers, central critical {$L$} values,
              and restriction problems in the representation theory of
              classical groups},
      NOTE = {Sur les conjectures de Gross et Prasad. I},
   JOURNAL = {Ast\'{e}risque},
  FJOURNAL = {Ast\'{e}risque},
    NUMBER = {346},
      YEAR = {2012},
     PAGES = {1--109},
      ISSN = {0303-1179},
      ISBN = {978-2-85629-348-5},
   MRCLASS = {22E50 (11F70 11R39 22E55)},
  MRNUMBER = {3202556},
}
\bibliographystyle{alpha}

\end{document}